\documentclass[a4paper,11pt]{amsart}

\usepackage[colorlinks,citecolor=red,urlcolor=blue,bookmarks=false,hypertexnames=true]{hyperref} 
\usepackage{amsmath, amssymb, amsfonts, amsbsy, amsthm, latexsym, epsfig}
\usepackage{epstopdf }
\usepackage{graphicx}
\usepackage[shortlabels]{enumitem}

\usepackage{
color}
\usepackage{lmodern}
\textwidth      5.5in
\oddsidemargin  0.5in
\evensidemargin 0.5in
\setlength{\topmargin}{.0in}
\setlength{\textheight}{8.5in}

\theoremstyle{plain}
\newtheorem{theorem}{Theorem} [section]
\newtheorem{lemma}[theorem]{Lemma}
\newtheorem{proposition}[theorem]{Proposition}

\theoremstyle{definition}

\newtheorem*{teo*}{Theorem}

\theoremstyle{definition}
\newtheorem{definition}[theorem]{Definition}

\newtheorem{remark}[theorem]{Remark}


\newcommand{\esssup}{{\mathrm{ess}\sup}}

\newcommand{\cT}{\mathcal{T}}

\newcommand{\R}{\mathbb{R}}
\newcommand{\Z}{\mathbb{Z}}
\newcommand{\N}{\mathbb{N}}

\newcommand {\w} {\omega}




\begin{document}


\title{Dynamical Sampling for Shift-preserving Operators}

\let\thefootnote\relax\footnote{
2010 {\it Mathematics Subject Classification:} Primary 47A15, 94A20, 42C15

{\it Keywords:} Shift-invariant spaces, shift-preserving operators, range function, range operator, frames, Dynamical Sampling.

The research of the authors is partially supported by grants: UBACyT 20020170100430BA, PICT 2014-1480 (ANPCyT) and CONICET PIP 11220150100355. In particular VP is also supported  by UBACyT 20020170200057BA and PICT-2016- 2616 (Joven).
}

\author{A. Aguilera}
\address{ Departamento de Matem\'atica, Universidad de Buenos Aires,
	Instituto de Matem\'atica "Luis Santal\'o" (IMAS-CONICET-UBA), Buenos Aires, Argentina}
\email{aaguilera@dm.uba.ar}

\author{C. Cabrelli}
\address{ Departamento de Matem\'atica, Universidad de Buenos Aires,
	Instituto de Matem\'atica "Luis Santal\'o" (IMAS-CONICET-UBA), Buenos Aires, Argentina}
\email{cabrelli@dm.uba.ar}

\author{D. Carbajal}
\address{ Departamento de Matem\'atica, Universidad de Buenos Aires,
	Instituto de Matem\'atica "Luis Santal\'o" (IMAS-CONICET-UBA), Buenos Aires, Argentina}
\email{dcarbajal@dm.uba.ar}

\author{V. Paternostro}
\address{ Departamento de Matem\'atica, Universidad de Buenos Aires,
	Instituto de Matem\'atica "Luis Santal\'o" (IMAS-CONICET-UBA), Buenos Aires, Argentina}
\email{vpater@dm.uba.ar}

\begin{abstract}
	In this note, we solve the dynamical sampling problem
	for a class of shift-preserving operators $L:V\to V$ acting on a
	finitely generated shift-invariant space $V$. We find conditions
	on $L$   and  a finite set of functions of $V$ so that
	 the iterations of the operator $L$ on the functions produce
	a frame generator set of $V$. This means that  the integer translations of the
	 generators form a  frame of $V$. 
	\end{abstract}

\maketitle
\section{Introduction}

Dynamical Sampling addresses the problem of recovering  a signal that evolves from its spatial-time samples.
That is, let $D$ be a bounded operator (the  evolution operator) and $f$ an unknown signal that we want to recover.
Assume that we have insufficient samples of $f$. Will it be possible to compensate for this lack of information from $f$,
if we sample the evolved signals at the same locations? i.e. sampling the signals $Df, D^2f,...$?

Mathematically, this question can be reformulated as  follows (see \cite{ACMT}):
 Let $\mathcal{H}$ be a separable Hilbert space, $D: \mathcal{H} \rightarrow \mathcal{H}$
a bounded operator and $\mathcal F=\{f_i: i \in I\}  \subset \mathcal{H}$ a set of functions. Find conditions on $D$ and $\mathcal{F}$ such that
$\{D^j f_i: i\in I,\, j\in K\}$  is a basis or a frame of $\mathcal{H}$. Here, $I$ and $K$  are subsets of $\mathbb{N}\cup \{0\}$.

This problem has recently attracted a lot of attention and has been set in different scenarios.
See   \cite{AADP},\cite{ADK},\cite{AT14},\cite{APT15},\cite{ADK15},\cite{ACMT},\cite{AP} for different instances of the dynamical sampling problem. In \cite{CHP},\cite{CHR},\cite{CHS18},\cite{P17}, the authors studied the problem of when a given frame  can be represented as a discrete orbit of an operator. See also
\cite{ACMP19},\cite{Tang17} for posible applications. 

One case that is mathematically very deep and rich is  when $\mathcal{H}$
is infinite-dimen\-sional, $D$ is a bounded normal operator and $\mathcal F$ is finite.
This case has been tackled using  techniques from different areas such as spectral theory, Hardy spaces,
and Carleson measures (see \cite{ACMT},\cite{AP},\cite{CMPP},\cite{ACCMP},\cite{CMS19}). In particular, it has been proved that in this case the iterations of a 
finite set of functions under a bounded normal operator  will never be a basis  (see \cite{ACMT} and \cite{CMPP}),
so only frames are possible and require many hypotheses on $D$ and $\mathcal F.$
On the other hand, the finite-dimensional case was solved completely for general linear transformations in \cite{ACMT}, (see also \cite{CMPP}).
However, in this case, no estimates  of the frame bounds were given.

In this article, we study the {\it Dynamical Sampling}  problem for {\it shift-preserving} operators acting on {\it shift-invariant}  spaces of $L^2(\R^d)$, as we describe below.

A shift-invariant  space is a  closed subspace $V$ of $L^2(\R^d)$ that is invariant under the action of translations along $\Z^d$ (or a lattice in a more general case). 
A set $\Phi \subset V$ is a set of generators of $V$ if $V=S(\Phi) = \overline{\text{span}} \left\{ T_k\varphi\,:\,\varphi \in \Phi, \,k\in\mathbb Z^d \right\}$, where $T_kf(x)= f(x-k)$.
 If $V=S(\Phi)$ for a finite set $\Phi$ then $V$ is said to be {\it finitely generated},
 and its {\it length} is the minimum  cardinal of all sets of generators of $V$.
 If the integer translates of a set of  generators form a frame of $S(\Phi)$ we say that $\Phi$ is a {\it frame generator set}.
An operator  $L :V \rightarrow V$ is said {\it shift preserving} if it commutes with  integer translates (i.e. $LT_k=T_kL$ for every $k\in \Z^d$).

The dynamical sampling problem  for shift-preserving operators that we study and solve in this article is the following.

Assume that $V$ is a finitely generated shift-invariant  space of length $\ell$ and that $\mathcal F=\left\{ f_1,...,f_m \right\} \subset V$. Let $L: V \rightarrow V$ be a bounded shift-preserving  operator. {\it Find necessary and sufficient conditions  on $\mathcal F$ and $L$ 
 in order that the collection $\left\{L^j f_i : i=1,...,m \,; j=0,...,\ell-1 \right\}$
is a frame generator set of $V$.}

As an application, we can think that we want to recover the shift-invariant space $V$ and we only know some functions $ f_1,...,f_m $ in $V$ (i.e. the set $\mathcal F$).
If $\mathcal F$ is not a set of generators, then the integer  translates of the functions in $\mathcal F$ are not enough to obtain the whole space $V.$
So we resort to the evolution operator $L$ to get a frame generator set.

To solve the dynamical sampling  problem  for shift-preserving operators, we make  intensive use of the concept of {\it range function}, {\it range operator }and  {\it fiberization techniques}, (see Section \ref{sec-preliminaries} for definitions and properties).

The results require a thorough understanding of the structure of shift-preserving  operators. 
The invariant space $V$  is isomorphic to a field  of finite-dimensional subspaces of $\ell^2(\Z^d)$  (each one is a value of the range function) and the shift-preserving operator  is isomorphic to a field of linear transformations  acting on these subspaces (the range operator).

Since each value of a range operator is a linear transformation acting on a finite-dimensional space,
the main idea behind this program is to translate the well known structure of these  linear transformations to our shift-preserving  operator. This requires certain uniformity across the different values of the range operator. 
In this regard, we will turn to the theory of $s$-diagonalization of shift-preserving operators that is fully developed in \cite{ACCP}.

The key point in our analysis will be to reduce the problem of dynamical sampling for a shift-preserving operator acting on a  finitely generated  shift-invariant space,  to a  family of dynamical  sampling problems where the operator that we iterate (the fiber of the range operator) acts on a finite-dimensional space (the fiber spaces).  
Then, we apply the known results on the finite-dimensional dynamical sampling to this case. Since  our problem  requires that the   frame bounds are uniform across the fiber spaces, we obtain an estimate of the frame bounds for the finite-dimensional case, 
(see Section \ref{section-SIS}).

Using this estimate we  obtain our main result that solves the dynamical problem. That is, we give necessary and sufficient conditions on $L$ an $\mathcal F$  to extend the set $\mathcal F$ to a frame generator set by iterating $L$ on the functions in $\mathcal F$.

The paper is organized as follows. In Section \ref{sec-preliminaries} we provide the basics of the theory of shift-invariant spaces and shift-preserving operators needed for the development of the whole paper. In particular, we review the theory of 
$s$-diagonalization for shift-preserving operators.
The main results are presented in Section \ref{section-SIS}.
In Subsection \ref{sec-bounds} we  give frame bound estimates for the finite-dimensional case of dynamical sampling.  Then, in Subsection \ref{section-DS-SP} we solve the dynamical sampling problem for shift-preserving operators.

\section{Preliminaries}\label{sec-preliminaries}

\subsection{Shift-invariant spaces and frame generator sets}\label{subsection-SIS-FGS}

\

In this subsection, we review some of the standard facts on shift-invariant subspaces of $L^{2}(\mathbb{R}^{d})$ and recall a caracterization of frames on these spaces using a technique known as fiberization. 
These spaces have been used in approximation theory, sampling theory, and wavelets, and their structure is very well known.
See \cite{H},\cite{BDR1},\cite{BDR2},\cite{RS},\cite{B} in the Euclidean case, and \cite{CP10},\cite{BR14},\cite{BHP15} in the context of topological groups.
We now state the precise definitions and some properties of these spaces. For properties of frames see \cite{C}.

\begin{definition}
	A closed subspace $V\subset L^2(\mathbb R^d)$ is {\it shift invariant} if for each $f\in V$ we have $T_k f\in V$ for any $k\in \mathbb Z^d$, where $T_kf(x)= f(x-k)$.
\end{definition}

Given a countable set $\Phi\subset L^2(\mathbb R^d)$, the shift-invariant space generated by $\Phi$ is
$$V=S(\Phi) = \overline{\text{span}} \left\{ T_k\varphi\,:\,\varphi \in \Phi, \,k\in\mathbb Z^d \right\}$$
and $\Phi$ is called a set of generators of $V$. When $\Phi$ is a finite set, we say that $V$ is a {\it finitely generated} shift-invariant space. 

We will denote by $E(\Phi)$ the family of translations of $\Phi$, i.e. 
$$E(\Phi) = \left\{T_k\varphi\,:\,\varphi \in \Phi, \,k\in\mathbb Z^d \right\}.$$
When $E(\Phi)$ forms a frame of V we will
say that $\Phi$ is a {\it frame generator set} of V.

The structure of a shift-invariant space can be studied in terms of its range function. This technique is known as fiberization.

Let $L^2([0,1)^d,\ell^2(\mathbb Z^d))$ be the Hilbert space of all vector-valued measurable functions $\psi: [0,1)^d\rightarrow \ell^2(\mathbb Z^d)$ with finite norm, where the norm is given by $$\|\psi\|=\left( \int_{[
0,1)^{d}
} \|\psi(\omega)\|^2_{\ell^2} \,d\omega\right)^{1/2}.$$ 

The Fourier transform  of $f\in L^1(\R^d)$ is given by
$$\hat{f}(\w) = \int_{\mathbb{R}^{d}} f(x)e^{-2\pi i \langle x,\w\rangle}\,dx, $$
and extends by density to an isometric isomorphism in $L^2(\R^d)$.
\begin{proposition} \cite[Proposition 1.2]{B}\label{isometria}
	The map $\mathcal T: L^2(\mathbb R^d) \to L^2([0,1)^d,\ell^2(\mathbb Z^d))$ defined by
	$$
	\mathcal T f(\omega) = \{\hat{f} (\omega+k) \}_{k \in \mathbb Z^d},
	$$
	is an isometric isomorphism. We call $\mathcal T f(\omega)$ the fiber of $f$ at $\omega$. Moreover, it satisfies that \begin{equation*}\label{modulation}
	\mathcal{T} T_{k}f(\omega) = e_k(\omega) \,\mathcal T f(\omega),
	\end{equation*}   
	where $e_k(\omega)=e^{-2\pi i\langle \omega,k\rangle}$, $k\in\mathbb{Z}^{d}$.
\end{proposition}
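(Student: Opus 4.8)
The plan is to realize $\mathcal{T}$ as a composition of two isometric isomorphisms: the Plancherel transform on $L^2(\R^d)$, followed by an elementary ``unfolding'' map onto $L^2([0,1)^d,\ell^2(\Z^d))$, and then to read off the modulation identity directly from the translation rule for the Fourier transform. This is a classical fact (it is \cite[Proposition~1.2]{B}), so the aim is a short self-contained argument rather than anything new.

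First I would introduce the unfolding map $\mathcal{U}\colon L^2(\R^d)\to L^2([0,1)^d,\ell^2(\Z^d))$, $\mathcal{U}g(\w)=\{g(\w+k)\}_{k\in\Z^d}$, and check it is a well-defined linear isometry. Since the translates $[0,1)^d+k$, $k\in\Z^d$, tile $\R^d$ up to a null set, Tonelli's theorem yields
\[ \int_{[0,1)^d}\norm{\mathcal{U}g(\w)}_{\ell^2}^2\,d\w=\int_{[0,1)^d}\sum_{k\in\Z^d}\abs{g(\w+k)}^2\,d\w=\sum_{k\in\Z^d}\int_{[0,1)^d+k}\abs{g(x)}^2\,dx=\norm{g}_{L^2(\R^d)}^2, \]
so $\mathcal{U}g(\w)\in\ell^2(\Z^d)$ for a.e.\ $\w$ and $\mathcal{U}$ preserves norms; the strong measurability of $\w\mapsto\mathcal{U}g(\w)$ as an $\ell^2(\Z^d)$-valued function I would obtain by first treating compactly supported $g$ (for which the fiber has only finitely many nonzero entries) and passing to an $L^2$-limit. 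Surjectivity is immediate: given $\psi\in L^2([0,1)^d,\ell^2(\Z^d))$, set $g(\w+k)=\psi(\w)(k)$ for $\w\in[0,1)^d$, $k\in\Z^d$; the same tiling identity shows $g\in L^2(\R^d)$ with $\mathcal{U}g=\psi$. Hence $\mathcal{U}$ is an isometric isomorphism, and since the Fourier transform is an isometric isomorphism of $L^2(\R^d)$ by Plancherel, the composition $f\mapsto\mathcal{U}\hat f$, which is precisely the map $\mathcal{T}f(\w)=\{\hat f(\w+k)\}_{k\in\Z^d}$ of the statement, is an isometric isomorphism.

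For the intertwining relation I would start from $\widehat{T_kf}(\w)=e^{-2\pi i\langle \w,k\rangle}\hat f(\w)=e_k(\w)\hat f(\w)$, valid on $L^1\cap L^2(\R^d)$ and extended to all of $L^2(\R^d)$ by density, and then compute
\[ \mathcal{T}T_kf(\w)=\{\widehat{T_kf}(\w+j)\}_{j\in\Z^d}=\{e_k(\w+j)\,\hat f(\w+j)\}_{j\in\Z^d}=\{e_k(\w)\,\hat f(\w+j)\}_{j\in\Z^d}=e_k(\w)\,\mathcal{T}f(\w), \]
where I use $e_k(\w+j)=e_k(\w)\,e^{-2\pi i\langle j,k\rangle}=e_k(\w)$ since $\langle j,k\rangle\in\Z$ for $j,k\in\Z^d$.

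I do not expect a genuine obstacle here. The only slightly delicate points are the strong measurability of $\w\mapsto\mathcal{U}g(\w)$ and checking that the density argument for the modulation identity survives passage to the a.e.\ pointwise equality in $L^2([0,1)^d,\ell^2(\Z^d))$; both are routine approximation arguments, and if one prefers, the whole statement can simply be invoked from \cite{B}.
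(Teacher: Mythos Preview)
The paper does not give its own proof of this proposition: it is stated as a citation from \cite[Proposition~1.2]{B} and used as background, with no accompanying argument. Your proposal is a correct and standard self-contained proof---factoring $\mathcal T$ as the Plancherel transform followed by the tiling (unfolding) isometry $\mathcal U$, then reading off the modulation identity from $\widehat{T_kf}=e_k\hat f$ and the $\Z^d$-periodicity of $e_k$---so there is nothing to compare against beyond noting that you have supplied what the paper chose to quote.
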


\begin{definition}
A {\it range function} is a mapping
\begin{align*}
J:[0,1)^d&\rightarrow\{\text{\,closed subspaces of }\ell^2(\mathbb Z^d)\,\}\\
\omega&\mapsto J(\omega).
\end{align*}
\end{definition}

It is said that a range function $J$ is measurable if the scalar function $\omega\mapsto\langle P_{J(\omega)}u,v \rangle$ is measurable for every $u,v\in\ell^2(\mathbb Z^d)$, where $P_{J(\omega)}$ is the orthogonal projection from $\ell^2(\mathbb Z^d)$ onto $J(\omega)$.

Shift-invariant spaces are characterized in terms of range functions as the following theorem, due to Bownik, shows.

\begin{theorem}\cite[Proposition 1.5]{B}
A closed subspace $V\subset L^{2}(\mathbb{R}^{d})$ is shift invariant if and only if there exists a measurable range function $J$ such that
\begin{equation*}
V=\{f\in L^{2}(\mathbb{R}^{d}): \mathcal{T}f(\omega) \in J(\omega) \text{ for a.e. } \omega \in [0,1)^{d}\}.
\end{equation*}
Furthermore, if $V=S(\Phi)$ for some coun\-table set $\Phi\subset L^2(\mathbb R^d)$, then 
\begin{equation*}
J(\omega) = \overline{\text{span}}\{\mathcal T f(\omega)\,:\,f\in\Phi\,\}
\end{equation*} 
for a.e. $\omega\in[0,1)^d$.
\end{theorem}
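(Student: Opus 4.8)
The plan is to prove the two implications of the equivalence separately and then extract the explicit formula for $J$. The easy direction is the ``if'' part: assume $V=\{f\in L^2(\mathbb R^d):\mathcal Tf(\omega)\in J(\omega)\text{ for a.e. }\omega\}$ for some range function $J$. Then $V$ is a linear subspace since each $J(\omega)$ is; it is shift invariant because $\mathcal TT_kf(\omega)=e_k(\omega)\mathcal Tf(\omega)$ stays in the subspace $J(\omega)$ whenever $\mathcal Tf(\omega)$ does; and it is closed because $\mathcal T$ is an isometry (Proposition \ref{isometria}): if $f_n\to f$ in $L^2(\mathbb R^d)$ with $f_n\in V$, then $\mathcal Tf_n\to\mathcal Tf$ in $L^2([0,1)^d,\ell^2(\mathbb Z^d))$, so a subsequence satisfies $\mathcal Tf_{n_j}(\omega)\to\mathcal Tf(\omega)$ in $\ell^2(\mathbb Z^d)$ for a.e. $\omega$, and closedness of $J(\omega)$ forces $\mathcal Tf(\omega)\in J(\omega)$ a.e. Measurability of $J$ is not needed here.

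For the ``only if'' direction, given a shift-invariant $V$ I would choose a countable dense subset $\mathcal D=\{g_n:n\in\mathbb N\}\subset V$ (available by separability of $L^2(\mathbb R^d)$) and \emph{define} $J(\omega):=\overline{\text{span}}\{\mathcal Tg_n(\omega):n\in\mathbb N\}$. The first technical point is the measurability of $J$, which I would get from a measurable Gram--Schmidt procedure applied to the measurable vector fields $\omega\mapsto\mathcal Tg_n(\omega)$ (discarding, at each $\omega$, a vector lying in the span of the earlier ones): this produces countably many measurable fields $\eta_j$ forming an orthonormal basis of $J(\omega)$ for a.e. $\omega$, so $P_{J(\omega)}u=\sum_j\langle u,\eta_j(\omega)\rangle\,\eta_j(\omega)$ and $\omega\mapsto\langle P_{J(\omega)}u,v\rangle$ is measurable. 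Writing $V_J:=\{f:\mathcal Tf(\omega)\in J(\omega)\text{ a.e.}\}$, which is a closed shift-invariant subspace by the previous paragraph, the inclusion $V\subseteq V_J$ is immediate: $\mathcal Tg_n(\omega)\in J(\omega)$ and $\mathcal TT_kg_n(\omega)=e_k(\omega)\mathcal Tg_n(\omega)\in J(\omega)$ show $E(\mathcal D)\subset V_J$, hence $V=\overline{\text{span}}(E(\mathcal D))\subset V_J$.

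The heart of the matter is the reverse inclusion $V_J\subset V$, which I would prove by showing that any $h\in V_J$ with $h\perp V$ vanishes. Such an $h$ satisfies $\langle h,T_kg_n\rangle=0$ for all $k\in\mathbb Z^d$ and $n\in\mathbb N$; transporting through the isometry $\mathcal T$ and using $\mathcal TT_kg_n(\omega)=e_k(\omega)\mathcal Tg_n(\omega)$ turns this into $\int_{[0,1)^d}\overline{e_k(\omega)}\,c_n(\omega)\,d\omega=0$ for every $k$, where $c_n(\omega)=\langle\mathcal Th(\omega),\mathcal Tg_n(\omega)\rangle_{\ell^2}\in L^1([0,1)^d)$. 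Completeness of the trigonometric system in $L^1$ then forces $c_n=0$ a.e.; intersecting the countably many null sets gives $\mathcal Th(\omega)\perp\mathcal Tg_n(\omega)$ for all $n$ and a.e. $\omega$, i.e. $\mathcal Th(\omega)\perp J(\omega)$ a.e., and combining with $\mathcal Th(\omega)\in J(\omega)$ a.e. yields $\mathcal Th=0$, so $h=0$. Carrying out this same computation with two range functions representing the same $V$ also shows that such a range function is unique up to a.e. equality.

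Finally, for the ``furthermore'': if $V=S(\Phi)$ with $\Phi$ countable, then $E(\Phi)\subset V$ is total in $V$ by the definition of $S(\Phi)$, so the construction above goes through verbatim with $\mathcal D=\Phi$ in place of a dense set, and the resulting $J$ equals $\overline{\text{span}}\{\mathcal T\varphi(\omega):\varphi\in\Phi\}$ a.e.; by the uniqueness just noted, any range function representing $V$ agrees with it a.e. I expect the main obstacle to be the measurability of $J$ via the measurable Gram--Schmidt construction (and the attendant null-set bookkeeping); the conversion of $L^2(\mathbb R^d)$-orthogonality into fiberwise orthogonality through the uniqueness of Fourier coefficients is the other substantive ingredient, and the remaining steps are routine.
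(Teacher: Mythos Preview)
The paper does not give its own proof of this theorem: it is quoted verbatim as \cite[Proposition 1.5]{B} and used as a black box, so there is nothing in the present paper to compare against. Your argument is correct and is essentially the standard proof from Bownik's original paper: define $J(\omega)$ as the closed span of the fibers of a countable generating/dense family, use the modulation identity $\mathcal TT_kf(\omega)=e_k(\omega)\mathcal Tf(\omega)$ to get $V\subset V_J$, and obtain $V_J\subset V$ by turning global orthogonality $\langle h,T_kg_n\rangle=0$ into vanishing of the Fourier coefficients of $\omega\mapsto\langle\mathcal Th(\omega),\mathcal Tg_n(\omega)\rangle$. One small remark on the ``furthermore'': when you replace the dense set $\mathcal D$ by $\Phi$, the previous paragraph did not actually need density of $\mathcal D$ itself but only totality of $E(\mathcal D)$ in $V$, which for $\Phi$ is precisely the definition of $S(\Phi)$; you note this, but it is worth saying explicitly that the fiber space generated by $\Phi$ and by $E(\Phi)$ coincide because $e_k(\omega)\mathcal T\varphi(\omega)$ and $\mathcal T\varphi(\omega)$ span the same line.
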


We call the subspace $J(\omega)$ the fiber space of $V$ at $\omega$. Under the convention that two range functions are identified if they are equal a.e. $\omega\in [0,1)^{d}$, the correspondence between $V$ and $J$ is one-to-one.

In particular, when $\Phi$ is a finite set, the previous theorem allows us to translate some problems in infinite-dimensional shift-invariant spaces, into problems of finite dimension that can be treated with linear algebra.

The next lemma was proved by Helson (see \cite{H}) and will be useful in the main results of this paper.
\begin{lemma}\label{Helson projections}
Let $V\subset L^{2}(\mathbb{R}^d)$ be a shift-invariant space. For each $f\in L^2(\mathbb R^d)$ we have that
$$\mathcal T (P_V f)(\omega) = P_{J (\omega)}(\mathcal T f (\omega)).$$
\end{lemma}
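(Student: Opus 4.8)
The plan is to push the whole identity through the isometric isomorphism $\mathcal T$ and to show that, on the fiber side, the orthogonal projection onto the image of $V$ is performed fiber by fiber by the projections $P_{J(\omega)}$. Concretely, fix a measurable range function $J$ with
\[
V=\{f\in L^2(\mathbb R^d):\mathcal Tf(\omega)\in J(\omega)\text{ for a.e. }\omega\in[0,1)^d\},
\]
and set $\mathcal M_J:=\{\psi\in L^2([0,1)^d,\ell^2(\Z^d)):\psi(\omega)\in J(\omega)\text{ for a.e. }\omega\}$. By Bownik's theorem quoted above, $\mathcal T(V)=\mathcal M_J$; moreover $\mathcal M_J$ is closed, because $L^2$-convergence forces a.e.\ convergence along a subsequence and each $J(\omega)$ is a closed subspace. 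Since $\mathcal T$ is unitary, $\mathcal T P_V\mathcal T^{-1}=P_{\mathcal M_J}$, so the statement reduces to proving that $(P_{\mathcal M_J}\psi)(\omega)=P_{J(\omega)}(\psi(\omega))$ for every $\psi\in L^2([0,1)^d,\ell^2(\Z^d))$.

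To do this I would introduce $\Psi(\omega):=P_{J(\omega)}(\psi(\omega))$ and verify the three defining properties of an orthogonal projection. That $\Psi\in L^2([0,1)^d,\ell^2(\Z^d))$ follows, once measurability is known, from the pointwise estimate $\|\Psi(\omega)\|_{\ell^2}\le\|\psi(\omega)\|_{\ell^2}$. That $\Psi\in\mathcal M_J$ is immediate from $\Psi(\omega)\in J(\omega)$. And for the orthogonality, for any $\phi\in\mathcal M_J$ one has
\[
\langle\psi-\Psi,\phi\rangle=\int_{[0,1)^d}\big\langle\psi(\omega)-P_{J(\omega)}\psi(\omega),\phi(\omega)\big\rangle_{\ell^2}\,d\omega=0,
\]
since for a.e.\ $\omega$ the vector $\psi(\omega)-P_{J(\omega)}\psi(\omega)$ is orthogonal to $J(\omega)\ni\phi(\omega)$. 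Hence $\Psi\in\mathcal M_J$ and $\psi-\Psi\perp\mathcal M_J$, so $\Psi=P_{\mathcal M_J}\psi$; undoing $\mathcal T$ yields the claim.

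The one point that requires genuine care — and which I expect to be the main obstacle — is the measurability of $\omega\mapsto P_{J(\omega)}(\psi(\omega))$. I would handle it by approximation: for a simple function $\psi=\sum_j\mathbf 1_{A_j}u_j$ one has $\Psi(\omega)=\sum_j\mathbf 1_{A_j}(\omega)\,P_{J(\omega)}u_j$, which is (weakly, hence strongly) measurable exactly because $J$ is a measurable range function, i.e.\ $\omega\mapsto\langle P_{J(\omega)}u_j,v\rangle$ is measurable for every $v$. For general $\psi$, choose simple functions $\psi_n\to\psi$ in $L^2$, pass to an a.e.\ convergent subsequence, and use that every $P_{J(\omega)}$ has norm $\le1$ and is therefore continuous, so $P_{J(\omega)}\psi_{n_k}(\omega)\to P_{J(\omega)}\psi(\omega)$ for a.e.\ $\omega$; an a.e.\ limit of measurable $\ell^2(\Z^d)$-valued functions is measurable. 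Everything else is the routine fiberwise orthogonality computation above.
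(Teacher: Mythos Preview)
Your argument is correct. The paper itself does not supply a proof of this lemma at all; it simply states the result and attributes it to Helson's book \cite{H}. What you have written is the standard direct proof: conjugate $P_V$ by the unitary $\mathcal T$ to reduce to showing that $P_{\mathcal M_J}$ acts fiberwise as $P_{J(\omega)}$, then verify this by checking that $\omega\mapsto P_{J(\omega)}\psi(\omega)$ lands in $\mathcal M_J$ and that the residual is orthogonal to $\mathcal M_J$. Your care with the measurability of $\omega\mapsto P_{J(\omega)}\psi(\omega)$ via simple-function approximation is appropriate and is indeed the only nonroutine step. So you are providing more than the paper does; there is nothing to compare against, and no gap to report.
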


It is possible to give a characterization of frames of a shift-invariant space $V$ in terms of its fibers as shown in the next result which will be crucial for our problem. 

\begin{theorem} \cite[Theorem 2.3]{B}\label{riesz basis fiber}
	Let $\Phi\subset L^2(\mathbb R^d)$ be a countable set. Then the following conditions are equivalent:
	\begin{enumerate}[\rm(i)]
	\item The system $E(\Phi)$ is a frame of $V$ with bounds $A,B>0$;
	\item The system $\left\{\mathcal T \varphi(\omega)\,:\,\varphi\in\Phi\,\right\}\subset\ell^2(\mathbb Z^d)$ is a frame of $J(\omega)$ with uniform bounds $A,B>0$ for a.e. $\omega\in [0,1)^d$.
	\end{enumerate}
\end{theorem}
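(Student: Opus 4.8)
The plan is to transport both frame inequalities through the isometry $\mathcal{T}$ of Proposition~\ref{isometria}, using that $\{e_k\}_{k\in\mathbb{Z}^d}$ is an orthonormal basis of $L^2([0,1)^d)$. For $f\in V$ and $\varphi\in\Phi$ I would put $w_{f,\varphi}(\omega)=\langle \mathcal{T}f(\omega),\mathcal{T}\varphi(\omega)\rangle_{\ell^2}$. Since $\|\mathcal{T}f(\cdot)\|_{\ell^2}$ and $\|\mathcal{T}\varphi(\cdot)\|_{\ell^2}$ belong to $L^2([0,1)^d)$, the Cauchy--Schwarz inequality gives $w_{f,\varphi}\in L^1([0,1)^d)$, and from $\mathcal{T}T_k\varphi(\omega)=e_k(\omega)\mathcal{T}\varphi(\omega)$ one reads off that $\langle f,T_k\varphi\rangle=\int_{[0,1)^d}\overline{e_k(\omega)}\,w_{f,\varphi}(\omega)\,d\omega$ is the $k$-th Fourier coefficient of $w_{f,\varphi}$. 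Hence, by Parseval on $L^2([0,1)^d)$, one has $\{\langle f,T_k\varphi\rangle\}_k\in\ell^2(\mathbb{Z}^d)$ if and only if $w_{f,\varphi}\in L^2([0,1)^d)$, and in that case
$$\sum_{k\in\mathbb{Z}^d}|\langle f,T_k\varphi\rangle|^2=\int_{[0,1)^d}|\langle \mathcal{T}f(\omega),\mathcal{T}\varphi(\omega)\rangle|^2\,d\omega .$$
Summing over $\varphi$ (Tonelli) and recalling $\|f\|^2=\int_{[0,1)^d}\|\mathcal{T}f(\omega)\|_{\ell^2}^2\,d\omega$, the whole problem is reduced to comparing $\int_{[0,1)^d}\sum_{\varphi\in\Phi}|\langle \mathcal{T}f(\omega),\mathcal{T}\varphi(\omega)\rangle|^2\,d\omega$ with $\int_{[0,1)^d}\|\mathcal{T}f(\omega)\|_{\ell^2}^2\,d\omega$.

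I would handle the direction (ii)$\Rightarrow$(i) as follows: given $f\in V$, Bownik's characterization gives $\mathcal{T}f(\omega)\in J(\omega)$ for a.e.\ $\omega$, so the uniform fiber frame bounds apply pointwise, $A\|\mathcal{T}f(\omega)\|^2\le\sum_{\varphi}|\langle \mathcal{T}f(\omega),\mathcal{T}\varphi(\omega)\rangle|^2\le B\|\mathcal{T}f(\omega)\|^2$ a.e.; in particular each $w_{f,\varphi}\in L^2$, so the displayed identity is valid, and integrating the pointwise inequality shows that $E(\Phi)$ is a frame of $V$ with bounds $A,B$.

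For the direction (i)$\Rightarrow$(ii), the point is to manufacture enough vectors in the fibers without a measurable selection. Fix a countable dense set $\{a_n\}\subset\ell^2(\mathbb{Z}^d)$ and set $h_n(\omega)=P_{J(\omega)}a_n$. By measurability of the range function $J$, each $h_n$ is a measurable, square-integrable function with $h_n(\omega)\in J(\omega)$, hence $h_n=\mathcal{T}g_n$ with $g_n\in V$; moreover $\{h_n(\omega)\}_n$ is dense in $J(\omega)$ for \emph{every} $\omega$, since $P_{J(\omega)}$ is a contraction fixing $J(\omega)$. For any measurable $E\subset[0,1)^d$ the function $\car_E\,h_n$ is again $\mathcal{T}$ of an element of $V$, and its Bessel bound $B$ (from (i)) ensures the $L^2$-summability needed to apply the identity above, so the frame inequality of $E(\Phi)$ yields
$$A\int_E\|h_n(\omega)\|_{\ell^2}^2\,d\omega\le\int_E\sum_{\varphi}|\langle h_n(\omega),\mathcal{T}\varphi(\omega)\rangle|^2\,d\omega\le B\int_E\|h_n(\omega)\|_{\ell^2}^2\,d\omega .$$
Since $E$ is arbitrary, the integrands satisfy the same chain of inequalities a.e.; intersecting the countably many full-measure sets, there is one full-measure set on which $A\|h_n(\omega)\|^2\le\sum_{\varphi}|\langle h_n(\omega),\mathcal{T}\varphi(\omega)\rangle|^2\le B\|h_n(\omega)\|^2$ holds simultaneously for all $n$. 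Finally the upper bound makes $v\mapsto(\langle v,\mathcal{T}\varphi(\omega)\rangle)_{\varphi\in\Phi}$ a bounded linear map $J(\omega)\to\ell^2(\Phi)$, so both inequalities pass to the limit along $h_{n_j}(\omega)\to v$ and, by density, hold for every $v\in J(\omega)$; this is precisely statement (ii) with uniform bounds $A,B$.

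The main obstacle is this second direction: the global inequality for $E(\Phi)$ only controls, for each fixed $f$, the single fiber vector $\mathcal{T}f(\omega)$, so one cannot argue pointwise in $\omega$ directly. The device that overcomes it is the combined use of (a) the projected test functions $h_n$ to reach a countable dense subset of each $J(\omega)$, (b) localization by indicators $\car_E$ to upgrade integrated inequalities to a.e.\ pointwise ones, and (c) a density-plus-continuity step that relies on having established the Bessel (upper) bound first. The remaining points --- measurability of $\omega\mapsto P_{J(\omega)}a_n$, surjectivity of $\mathcal{T}$ onto $L^2([0,1)^d,\ell^2(\mathbb{Z}^d))$ and the identification of $V$ with functions whose fibers lie in $J(\omega)$, and separability of $V$ --- are routine consequences of the results recalled above.
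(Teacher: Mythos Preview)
The paper does not prove this theorem at all: it is quoted verbatim from Bownik \cite[Theorem~2.3]{B} and used as a black box throughout Section~\ref{section-SIS}. So there is no ``paper's own proof'' to compare against. Your argument is correct and is essentially the standard one (and in particular matches Bownik's original proof): the Parseval identity $\sum_k|\langle f,T_k\varphi\rangle|^2=\int|\langle\mathcal{T}f(\omega),\mathcal{T}\varphi(\omega)\rangle|^2\,d\omega$ reduces the global frame inequality to a fiberwise one, the direction (ii)$\Rightarrow$(i) is immediate by integration, and for (i)$\Rightarrow$(ii) you correctly use the projected test vectors $P_{J(\omega)}a_n$, localization by indicators, and a density/continuity step (Fatou for the upper bound, then continuity of the bounded analysis operator for the lower bound). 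One cosmetic point: the claim that $\{h_n(\omega)\}$ is dense in $J(\omega)$ ``for every $\omega$'' should read ``for a.e.\ $\omega$'', since the range function is only defined up to null sets; this does not affect the argument.
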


Recall that the {\it length} of a finitely generated shift-invariant space $V\subset L^{2}(\mathbb{R}^{d})$ is 
defined as the smallest natural number $\ell$ such that there exist $\varphi_{1},...,\varphi_{\ell} \in V$ with $V=S(\varphi_{1},...,\varphi_{\ell})$.
An equivalent definition of the length in terms of the range function $J$ associated to $V$ is $\ell= \text{\rm ess sup}_{\omega\in[0,1)^d} \dim J(\omega)$.

The {\it spectrum} of $V$ is defined by the set $$ \sigma(V) = \left\{\omega \in [0,1)^d\,:\, \dim J(\omega) > 0 \right\}.$$

\

\subsection{Shift-preserving operators and s-diagonalization}\label{subsection-seigenvalues} 

In this subsection, we give a brief exposition on the structure of shift-preserving operators and $s$-dia\-go\-na\-li\-za\-tion. For a more comprehensive treatment of this topic we refer the reader to \cite{ACCP} and \cite{B}.

\begin{definition}
Let $V\subset L^2(\mathbb R^d)$ be a shift-invariant space and $L:V\rightarrow L^2(\mathbb R^d)$ be a bounded operator. We say that $L$ is {\it shift preserving} if $LT_k = T_k L$ for all $k\in\mathbb Z^d$.
\end{definition}

Shift-preserving operators are the natural operators acting on shift-invariant\break spaces.
They were introduced by Bownik in \cite{B}, where he also studied their  properties through the concept of {\it range operator}.
 The notion of range operator permits to decode the action of a shift-preserving  operator  throughout
its  fiber map. These last two concepts are a very natural tool for studying shift-invariant spaces.
Shift-preserving operators are in one to one correspondence with range operators. 
\begin{definition}
Let $V$ be a shift-invariant space with range function $J$. A {\it range operator} on $J$ is a mapping
$$R: [0,1)^d\rightarrow \left\{ \text{bounded operators defined on closed subspaces of } \ell^2(\mathbb Z^d) \right\},$$
such that the domain of $R(\omega)$ is $J(\omega)$ for a.e. $\omega\in[0,1)^d$. 
\end{definition}

It is said that $R$ is measurable if  $\omega \mapsto \langle R(\omega) P_{J(\omega)} u,v\rangle$ is a measurable scalar function for every $u,v\in\ell^2(\mathbb Z^d)$.

    In \cite[Theorem 4.5]{B}, Bownik proved that given a shift-preserving operator $L:V\rightarrow L^2(\mathbb R^d)$, there exists a measurable range operator $R$ on $J$ such that 
\begin{equation}\label{prop of R}
(\mathcal T\circ L) f(\omega) = R(\omega) \left(\mathcal T f(\omega)\right),
\end{equation}
for a.e. $\omega\in\mathbb [0,1)^d$ and $f\in V$. If range operators which are equal almost everywhere are identified, the range operator associated to $L$ is unique.

Conversely, if $R$ is a measurable range operator on $J$ with $$\underset{\omega\in [0,1)^d}{\esssup} \|R(\omega)\|<\infty $$ then, there exists a bounded shift-preserving operator $L:V\rightarrow L^{2}(\R^{d})$ such that the intertwining property (\ref{prop of R}) holds, and its norm operator is given by
\begin{equation}\label{norm of R and L}
\|L\| = \underset{\omega\in [0,1)^d}{\esssup} \|R(\omega)\|.
\end{equation}

We will consider the particular case where $L:V\rightarrow V$. Thus, this implies that $L$ has a corresponding range operator $R$ such that $R(\omega):J(\omega)\rightarrow J(\omega)$ for a.e. $\omega\in[0,1)^d$. The following result is due to Bownik (see \cite{B}).

\begin{theorem}\label{adjoint L}
Let $V$ be a shift-invariant space and $L:V\rightarrow V$ a shift-preserving operator with associated range operator $R$. Then, the adjoint operator $L^*:V\rightarrow V$ is also shift preserving and its corresponding range operator $R^*$ satisfies that $R^*(\omega) = (R(\omega))^*$ for a.e. $\omega\in [0,1)^d$.  As a consequence,  $L$ is self-adjoint if and only if  $R(\omega)$ is self-adjoint for a.e. $\omega \in [0,1)^d$, and  $L$ is a normal operator if and only if  $R(\omega)$ is a normal operator for a.e. $\omega \in [0,1)^d$.  
\end{theorem}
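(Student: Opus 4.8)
The plan is to transport everything to the fiber side through the isometry $\mathcal T$ of Proposition \ref{isometria} and then reconstruct the range operator of $L^*$ from a \emph{localized} bilinear identity. First I would check that $L^*$ is shift preserving. Since $V$ is shift invariant, each $T_k$ restricts to a unitary operator on $V$ whose adjoint on $V$ is $T_{-k}$; taking adjoints in $LT_k=T_kL$ then gives $T_{-k}L^*=L^*T_{-k}$ for every $k\in\Z^d$, so $L^*$ commutes with all integer translates and maps $V$ into $V$. By \cite[Theorem 4.5]{B} it therefore has a unique (up to a.e.\ equality) measurable range operator, which I will call $R^*$, characterized by $(\mathcal T\circ L^*)g(\omega)=R^*(\omega)(\mathcal T g(\omega))$ for a.e.\ $\omega$.

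Next I would identify $R^*(\omega)$ with $(R(\omega))^*$. For $f,g\in V$, using that $\mathcal T$ is an isometric isomorphism together with the intertwining property \eqref{prop of R} applied to both $L$ and $L^*$,
\[
\int_{[0,1)^d}\langle R(\omega)\mathcal T f(\omega),\mathcal T g(\omega)\rangle\,d\omega=\langle Lf,g\rangle=\langle f,L^*g\rangle=\int_{[0,1)^d}\langle \mathcal T f(\omega),R^*(\omega)\mathcal T g(\omega)\rangle\,d\omega .
\]
To turn this into a pointwise statement I would localize: for any measurable $A\subseteq[0,1)^d$ the element of $L^2([0,1)^d,\ell^2(\Z^d))$ with fiber $\chi_A(\omega)\mathcal T f(\omega)$ still has all its fibers in $J(\omega)$, hence equals $\mathcal T\widetilde f$ for some $\widetilde f\in V$ by Bownik's characterization of shift-invariant spaces; replacing $f$ by $\widetilde f$ shows the identity holds with $\int_A$ in place of $\int_{[0,1)^d}$ for every $A$, and since both integrands lie in $L^1([0,1)^d)$ (as $\|R(\omega)\|,\|R^*(\omega)\|\le\|L\|$), they coincide a.e. Thus for each fixed pair $f,g\in V$ we get $\langle R(\omega)\mathcal T f(\omega),\mathcal T g(\omega)\rangle=\langle \mathcal T f(\omega),R^*(\omega)\mathcal T g(\omega)\rangle$ for a.e.\ $\omega$. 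Then I would fix a countable set $\{f_n\}\subset V$ with $\{\mathcal T f_n(\omega)\}_n$ dense in $J(\omega)$ for a.e.\ $\omega$ (e.g.\ a countable generating set of $V$ together with its integer translates); intersecting countably many full-measure sets and using boundedness of $R(\omega),R^*(\omega)$ and continuity of the inner product, this yields $\langle R(\omega)u,v\rangle=\langle u,R^*(\omega)v\rangle$ for all $u,v\in J(\omega)$ and a.e.\ $\omega$, i.e.\ $R^*(\omega)=(R(\omega))^*$ a.e.

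Finally, the two equivalences follow from uniqueness of the range operator. Since $L=L^*$ iff their range operators agree a.e., and the range operator of $L^*$ is $\omega\mapsto(R(\omega))^*$, self-adjointness of $L$ is equivalent to $R(\omega)=(R(\omega))^*$ a.e. For normality, composing the intertwining identities shows that $L^*L$ and $LL^*$ are shift preserving with range operators $\omega\mapsto(R(\omega))^*R(\omega)$ and $\omega\mapsto R(\omega)(R(\omega))^*$ respectively, so $LL^*=L^*L$ iff $R(\omega)(R(\omega))^*=(R(\omega))^*R(\omega)$ a.e., that is, iff $R(\omega)$ is normal a.e. The main obstacle is the middle step, passing from the global bilinear identity to the pointwise-in-$\omega$, all-vectors adjoint relation: it requires the characteristic-function localization together with a countable family determining the fiber spaces. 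Everything else is bookkeeping with \eqref{prop of R} and the uniqueness part of Bownik's correspondence.
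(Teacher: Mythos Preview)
The paper does not actually give a proof of this statement; it is attributed to Bownik with the sentence ``The following result is due to Bownik (see \cite{B}).'' So there is no in-paper argument to compare against.

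Your argument is correct and is essentially the standard one for direct-integral / fiber decompositions. The three steps are all sound: (1) adjoining $LT_k=T_kL$ gives $T_{-k}L^*=L^*T_{-k}$, hence $L^*$ is shift preserving and has a range operator by \cite[Theorem~4.5]{B}; (2) the global bilinear identity obtained via the isometry $\mathcal T$ is upgraded to a pointwise one by the $\chi_A$-localization trick, which is legitimate because $\omega\mapsto\chi_A(\omega)\mathcal T f(\omega)$ is square-integrable with fibers in $J(\omega)$, hence comes from an element of $V$ by Bownik's characterization; (3) passing from ``for each fixed pair $f,g$, a.e.\ $\omega$'' to ``for a.e.\ $\omega$, all $u,v\in J(\omega)$'' via a countable set whose fibers span $J(\omega)$ is exactly right, and a countable generating set $\Phi$ of $V$ already suffices since $J(\omega)=\overline{\mathrm{span}}\{\mathcal T\varphi(\omega):\varphi\in\Phi\}$ a.e. The final paragraph deducing the self-adjoint and normal equivalences from uniqueness of the range operator and the multiplicativity of the $L\mapsto R$ correspondence is also correct. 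One cosmetic remark: you need not invoke the integer translates of the generators in step (3), since the fibers of $\Phi$ alone span $J(\omega)$; but including them does no harm.
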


The structure of a shift-preserving operator $L$ acting on a shift-invariant space $V$ can be studied in terms of its range operator, as suggested by \eqref{prop of R}. When $V$ is a finitely generated shift-invariant space, $R$ can be seen as a field of linear transformations acting on finite-dimensional spaces. We can exploit this fact to translate the structure of these linear transformations back to the shift-preserving operator $L$ through the isometric isomorphism $\mathcal{T}$. In particular, this correspondence between $L$ and $R$ allows us to introduce a notion of diagonalization for shift-preserving operators, which is called $s$-diagonalization. This theory has been extensively developed in \cite{ACCP}. Here, we present the main definitions and results. For instance, we have a simpler representation of a bounded, normal, shift-preserving operator.

\begin{definition}
	We say that a sequence $a=\{a(j)\}_{j\in\Z^d}\in \ell^{2}(\mathbb{Z}^d)$ is of {\it bounded spectrum} if $\hat{a} \in L^{\infty}([0,1)^{d})$, where $\hat{a}(\omega)= \sum\limits_{j\in \mathbb{Z}^{d}} a(j)e_j(\omega)$.
\end{definition}

Now, we give the definitions of $s$-eigenvalue and $s$-eigenspace of a shift-preserving operator.

\begin{definition}\label{s-eigenvalue and s-eigenspace}
  Let $V$ be a shift-invariant space and $L:V\rightarrow V$ a bounded shift-preserving operator. Given $a\in \ell^{2}(\mathbb{Z}^d)$ a sequence of bounded spectrum, let $\Lambda_a:V\rightarrow V$ be the operator defined by $\Lambda_a = \sum_{j\in\mathbb Z^d} a(j)\, T_j.$  We say that $\Lambda_a$ is an {\it $s$-eigenvalue} of $L$ if 
  \begin{equation*} 
  	V_a := \ker\left(L - \Lambda_a\right)\neq\{0\}.
  	\end{equation*}
  	We call $V_a$ the {\it $s$-eigenspace} associated to $\Lambda_a$.
 \end{definition} 	
  
  It is easy to see that the bounded spectrum assumption of $a$ guarantees that $\Lambda_a$ is well-defined and bounded (see \cite[Proposition 4.1]{ACCP}). Furthermore, $V_a$ is a shift-invariant subspace of $V$ and for every $f\in V_a$, we have that $Lf=\Lambda_af.$ Since $\widehat{\Lambda_{a}f}=\hat{a}\hat{f}$,  we see that for every $f\in V_a$,
 \begin{equation*}\label{intertwining-prop} 
 R(\omega)(\mathcal Tf(\omega)) = \mathcal T(Lf)(\omega) = \cT(\Lambda_a f)(\w) = \hat{a}(\w)\cT f(\w),
 \end{equation*}
 for a.e. $\omega\in [0,1)^d.$ This shows that the $s$-eigenvalues of $L$ are closely related with the eigenvalues of its range operator, as we state in the next result whose proof is given in \cite{ACCP}.
 
 \begin{proposition}\label{prop-eigen}
 Let $V$ be a shift-invariant space with range function $J$ such that $\dim J(\w)<\infty$ for a.e. $\w\in [0,1)^d$, $L:V\rightarrow V$ a bounded shift-preserving operator and  $a\in \ell^{2}(\mathbb{Z}^d)$ a sequence of bounded spectrum. Then, the following statements hold:
 \begin{enumerate}
\item[\rm(i)] \label{eigenvalues}If $\Lambda_a$ is an $s$-eigenvalue of $L$, then $\lambda_a(\omega) := \hat{a}(\omega)$ is an eigenvalue of $R(\omega)$ for a.e. $\omega\in \sigma(V_a)$.
\item[\rm(ii)]  \label{eigenspaces} The mapping $\omega \mapsto \ker\left(R(\omega) - \lambda_a(\omega)\mathcal I\right)$, $\omega \in [0,1)^d$ is the measurable range function of $V_a$, which we will denote $J_{V_a}$.
 \end{enumerate}
 \end{proposition}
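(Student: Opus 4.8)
\textbf{Proof plan for Proposition \ref{prop-eigen}.}

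The plan is to exploit the intertwining relation between $L$ and its range operator $R$ together with the identification of shift-invariant subspaces with measurable range functions. For part (i), start from the observation already recorded before the statement: if $f \in V_a$ then $R(\omega)(\mathcal Tf(\omega)) = \hat a(\omega)\,\mathcal T f(\omega)$ for a.e.\ $\omega \in [0,1)^d$. Since $V_a \neq \{0\}$ is shift-invariant, it has an associated range function $J'$, and on its spectrum $\sigma(V_a)$ one has $J'(\omega) \neq \{0\}$, so there is some $g \in V_a$ with $\mathcal T g(\omega) \neq 0$ for a.e.\ $\omega$ in a positive-measure subset. The mild technical point is to upgrade this to: for a.e.\ $\omega \in \sigma(V_a)$ there exists $f\in V_a$ with $\mathcal T f(\omega) \neq 0$. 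This follows from measurability of $J'$ together with a standard selection/density argument: pick a countable dense family $\{f_n\}\subset V_a$; by the characterization of the range function of $V_a$ as $J'(\omega) = \overline{\mathrm{span}}\{\mathcal T f_n(\omega)\}$, the set where all $\mathcal T f_n(\omega)$ vanish is exactly where $J'(\omega)=\{0\}$, i.e.\ the complement of $\sigma(V_a)$ up to a null set. Hence for a.e.\ $\omega\in\sigma(V_a)$, choosing an $n$ with $\mathcal T f_n(\omega)\neq 0$ and applying the intertwining identity to $f_n\in V_a$, we get that $\lambda_a(\omega)=\hat a(\omega)$ is an eigenvalue of $R(\omega)$ with eigenvector $\mathcal T f_n(\omega)$.

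For part (ii), define the candidate range function $\widetilde J(\omega) := \ker(R(\omega) - \lambda_a(\omega)\mathcal I) \subset J(\omega)$; note this is a closed subspace since $R(\omega)$ is a bounded (indeed linear, finite-dimensional) operator on $J(\omega)$ and $\lambda_a$ is a measurable function (being $\hat a$ with $a$ of bounded spectrum). Measurability of $\widetilde J$ follows from measurability of $R$ and of $\omega\mapsto\lambda_a(\omega)$ by the usual arguments for kernels of measurable operator fields in the finite-dimensional range setting (the relevant machinery is in \cite{ACCP}; one can also argue via $\widetilde J(\omega) = J(\omega) \cap \ker(R(\omega)-\lambda_a(\omega)\mathcal I)$ and express orthogonal projections measurably). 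By Bownik's correspondence, $\widetilde J$ is the range function of a unique shift-invariant space $W \subset V$, and it remains to prove $W = V_a$. The inclusion $V_a \subset W$ is immediate: if $f\in V_a$ then for a.e.\ $\omega$ the vector $\mathcal Tf(\omega)$ lies in $J(\omega)$ and satisfies $R(\omega)\mathcal Tf(\omega)=\lambda_a(\omega)\mathcal Tf(\omega)$, so $\mathcal Tf(\omega)\in\widetilde J(\omega)$, whence $f\in W$. Conversely, if $f\in W$, then $\mathcal Tf(\omega)\in\widetilde J(\omega)$ a.e., so $R(\omega)\mathcal Tf(\omega) = \lambda_a(\omega)\mathcal Tf(\omega) = \hat a(\omega)\mathcal Tf(\omega)$, and since $(\mathcal T\circ L)f(\omega) = R(\omega)\mathcal Tf(\omega)$ and $\mathcal T(\Lambda_a f)(\omega) = \hat a(\omega)\mathcal Tf(\omega)$, applying $\mathcal T^{-1}$ gives $Lf = \Lambda_a f$, i.e.\ $f\in V_a$. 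Thus $W=V_a$ and $\widetilde J = J_{V_a}$.

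I expect the only genuine obstacle to be the measurability bookkeeping: verifying that $\omega\mapsto\ker(R(\omega)-\lambda_a(\omega)\mathcal I)$ is a measurable range function, and that the ``for a.e.\ $\omega\in\sigma(V_a)$ there is a nonzero fiber in $V_a$'' step is carried out cleanly rather than just pointwise. Both are handled by the standard theory of measurable range functions and range operators over finite-dimensional fibers developed in \cite{B} and \cite{ACCP}, so the argument reduces, fiber by fiber, to elementary linear algebra — the eigenvalue/eigenspace correspondence for the single operator $R(\omega)$ — glued together measurably. Everything else is a direct translation through the isometric isomorphism $\mathcal T$ and the intertwining property \eqref{prop of R}.
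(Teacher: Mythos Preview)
Your proof plan is correct and follows the natural fiberwise strategy: part (i) via the intertwining identity plus a countable generating set for $V_a$ to produce nonzero eigenvectors on $\sigma(V_a)$, and part (ii) via double inclusion between $V_a$ and the shift-invariant space associated to $\omega\mapsto\ker(R(\omega)-\hat a(\omega)\mathcal I)$, with measurability deferred to the standard machinery. Note, however, that the paper does not actually prove this proposition here --- it states the result and refers to \cite{ACCP} for the proof --- so there is no in-paper argument to compare against; your outline is essentially the expected argument and is consistent with the fiberization framework the paper relies on throughout.
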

 
We remark that if $V_a\cap V_b = \{0\}$, then $\hat{a}(\w)\neq \hat{b}(\w)$ almost everywhere in \break $\sigma(V_a)\cap\sigma(V_b)$, that is, $\hat{a}(\w)$ and $\hat{b}(\w)$ correspond to different eigenvalues of $R(\w)$ (see \cite[Proposition 4.7]{ACCP}).

 \begin{definition}
 	Let $V$ be a finitely generated shift-invariant space and $L:V\rightarrow V$ a bounded shift-preserving operator. We say that $L$ is {\it $s$-diagonalizable} if there exist $r\in\mathbb{N}$ and $a_1,\dots,a_r$ sequences of bounded spectrum  such that $\Lambda_{a_1}, \dots, \Lambda_{a_r}$ are $s$-eigenvalues of $L$  and
 	\begin{equation*}\label{direct sum V}
		V = V_{a_1}\oplus \dots \oplus V_{a_r},
	\end{equation*}
	where $\Lambda_{a_j}$ and $V_{a_j}$ for $j=1,...,r$ are given in Definition \ref{s-eigenvalue and s-eigenspace}. In this case, we say that $(V,L,a_1,...a_r)$ is an \it{$s$-diagonalization} of $L$.
 \end{definition}
 
When $L$ is $s$-diagonalizable and $(V,L,a_1,...a_r)$ is an $s$-diagonalization of $L$, then $R(\omega)$ is diagonalizable for a.e. $\omega\in\sigma(V)$ (see  \cite[Theorem 6.4]{ACCP}). In particular, the range function $J$ associated to $V$ has the following decomposition in direct sum 
\begin{equation*}\label{direct sum J}
		J(\omega) = J_{V_{a_1}}(\w)\oplus \dots \oplus J_{V_{a_r}}(\w),
	\end{equation*}
for a.e. $\omega\in [0,1)^{d}$,
where $J_{V_{a_s}}(\omega) = \ker(R(\omega)-\hat{a}_{s}(\omega))$ is the range function associated to the shift-invariant subspace $V_{a_s}$ for every $s=1,...,r$.

\begin{remark}\label{Teo 6.13}
In \cite[Theorem 6.13]{ACCP}, it was proved that given an $s$-diagonalizable shift-preserving operator $L$  acting on a finitely generated shift-invariant space $V$, there always exists an $s$-diagonalization of $L$, say $(V,L,a_1,...,a_r)$, with the property that the spectra of the $s$-eigenspaces of $L$ satisfy the condition $\sigma(V_{a_{s+1}})\subseteq \sigma(V_{a_{s}})$ for every $s=1,...,r-1$.

 We define for $s=1,\dots,r-1$ the sets $B_s := \sigma(V_{a_s})\setminus \sigma(V_{a_{s+1}})$ and $B_r:=\sigma(V_{a_r})$. Then,  $\sigma(V)=\bigcup_{s=1}^{r}B_s$ where the union is disjoint. Given $s\in\{1,\dots,r\}$, notice that for a.e. $\w\in B_s$ we have that $J_{V_{a_j}}(\w)\neq \{0\}$ for $1\leq j\leq s$ and $J_{V_{a_j}}(\w) = \{0\}$ for $s<j\leq r$, i.e. $R(\w)$ has exactly $s$ different eigenvalues in $B_s$.

Moreover, the construction of such $s$-diagonalization is based on \cite[Theorem 6.8]{ACCP} where the $s$-eigenvalues obtained satisfy that
$$\hat{a}_s(\w) = \begin{cases}
\lambda_s(\w),\,&\w\in \sigma(V_a)\\
K +s & \text{otherwise}
\end{cases}, \quad \w\in[0,1)^d$$
where $\lambda_s(\w):\sigma(V_{a_s})\to\mathbb C$ is a measurable function which is an eigenvalue of $R(\w)$ almost everywhere in $\sigma(V_{a_s})$ and $K>0$ is a constant such that $ K \geq \|R(\w)\|$ for a.e. $\w\in[0,1)^d$.

This facts will be needed in Subsection \ref{section-DS-SP}.

\end{remark}

The following generalized Spectral Theorem was obtained in \cite{ACCP} for bounded, shift-preserving operators which are normal.
 
\begin{theorem}\label{spectral theorem}
	Let $V$ be a finitely generated shift-invariant space and  $L:V\rightarrow V$ a bounded shift-preserving operator. If $L$ is normal, then it is $s$-diagonalizable and, if $(V,L,a_1, \dots,a_r)$ is an $s$-diagonalization of $L$, we have that
	\begin{equation*}\label{spectral theorem L}
		L= \sum_{s=1}^{r} \Lambda_{a_s} P_{V_{a_s}},
	\end{equation*}
where $P_{V_{a_s}}$ denotes the orthogonal projection of $L^2(\R^d)$ onto $V_{a_s}$ for $s=1, \dots, r$.
\end{theorem}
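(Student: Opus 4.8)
Here is a plan for proving Theorem~\ref{spectral theorem}.

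The plan is to reduce, via the isometric isomorphism $\mathcal T$, to the classical spectral theorem applied fiber by fiber to the range operator $R$ of $L$, and then to reassemble the fibers into shift-invariant subspaces. Since $V$ is finitely generated we have $\dim J(\omega)\le\ell<\infty$ a.e., and since $L$ is normal, Theorem~\ref{adjoint L} gives that $R(\omega)$ is a normal operator on the finite-dimensional space $J(\omega)$ for a.e.\ $\omega$. Hence, for a.e.\ $\omega\in\sigma(V)$, the ordinary spectral theorem yields the orthogonal decomposition $J(\omega)=\bigoplus_{\lambda}\ker\big(R(\omega)-\lambda\,\mathcal I\big)$ together with $R(\omega)=\sum_{\lambda}\lambda\,P_{\ker(R(\omega)-\lambda\mathcal I)}$, the sums running over the finitely many distinct eigenvalues $\lambda$ of $R(\omega)$.

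To obtain an $s$-diagonalization I would first produce the $s$-eigenvalues. The number $n(\omega)$ of distinct eigenvalues of $R(\omega)$ is a measurable function bounded by $\ell$, so $\sigma(V)$ splits, up to null sets, into finitely many measurable pieces on which $n$ is constant; on each such piece a measurable selection argument — the one carried out in detail in \cite{ACCP}, compare Remark~\ref{Teo 6.13} — chooses measurable scalar functions $\lambda_1(\omega),\dots,\lambda_{n(\omega)}(\omega)$ enumerating the eigenvalues together with the associated measurable field of spectral projections. Patching these pieces together and extending each $\lambda_j$ off its domain by a large constant, as in Remark~\ref{Teo 6.13}, produces functions $\hat a_1,\dots,\hat a_r\in L^\infty([0,1)^d)$, since $|\lambda_j(\omega)|\le\|R(\omega)\|\le\|L\|$ by \eqref{norm of R and L}; hence the sequences $a_1,\dots,a_r$ are of bounded spectrum. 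By Proposition~\ref{prop-eigen}, each $\Lambda_{a_j}$ is an $s$-eigenvalue of $L$ whose $s$-eigenspace $V_{a_j}$ has range function $J_{V_{a_j}}(\omega)=\ker\big(R(\omega)-\hat a_j(\omega)\mathcal I\big)$. Since the fiberwise identity $J(\omega)=\bigoplus_j J_{V_{a_j}}(\omega)$ holds a.e., the one-to-one correspondence between shift-invariant spaces and measurable range functions upgrades this to the internal direct sum $V=V_{a_1}\oplus\dots\oplus V_{a_r}$; thus $L$ is $s$-diagonalizable.

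Now fix an arbitrary $s$-diagonalization $(V,L,a_1,\dots,a_r)$ and prove the formula by comparing range operators through \eqref{prop of R}. The direct sum forces $V_{a_s}\cap V_{a_t}=\{0\}$ for $s\neq t$, so by the remark after Proposition~\ref{prop-eigen} we get $\hat a_s(\omega)\neq\hat a_t(\omega)$ a.e.\ on $\sigma(V_{a_s})\cap\sigma(V_{a_t})$; hence whenever $J_{V_{a_s}}(\omega)$ and $J_{V_{a_t}}(\omega)$ are both nonzero they are eigenspaces of the normal operator $R(\omega)$ for distinct eigenvalues, therefore orthogonal, and integrating $\langle\mathcal Tf(\omega),\mathcal Tg(\omega)\rangle$ over $[0,1)^d$ shows $V_{a_s}\perp V_{a_t}$. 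Consequently the $P_{V_{a_s}}$ are the coordinate projections of the orthogonal sum $V=\bigoplus_s V_{a_s}$, and Lemma~\ref{Helson projections} gives $\mathcal T(P_{V_{a_s}}f)(\omega)=P_{J_{V_{a_s}}(\omega)}\big(\mathcal Tf(\omega)\big)$. On the other hand, since the nonzero $J_{V_{a_s}}(\omega)$ are exactly the eigenspaces of $R(\omega)$ and span $J(\omega)$, the fiberwise spectral theorem reads $R(\omega)=\sum_{s=1}^r\hat a_s(\omega)\,P_{J_{V_{a_s}}(\omega)}$ a.e. Combining this with $\mathcal T(Lf)(\omega)=R(\omega)\mathcal Tf(\omega)$ and $\mathcal T(\Lambda_{a_s}f)(\omega)=\hat a_s(\omega)\mathcal Tf(\omega)$ gives $\mathcal T(Lf)(\omega)=\mathcal T\big(\sum_{s=1}^r\Lambda_{a_s}P_{V_{a_s}}f\big)(\omega)$ a.e.\ for every $f\in V$, and injectivity of $\mathcal T$ yields $L=\sum_{s=1}^r\Lambda_{a_s}P_{V_{a_s}}$.

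The main obstacle is the measurable construction underlying $s$-diagonalizability: although each $R(\omega)$ is unitarily diagonalizable, one must select the eigenvalue functions and spectral projections in a jointly measurable way over $\omega$ while ensuring the selected eigenvalue functions are of bounded spectrum, so that the multiplier operators $\Lambda_{a_j}$ are well defined and bounded and assemble into a bona fide internal direct sum of $V$; this partitioning of $\sigma(V)$ by the number of distinct eigenvalues, together with the accompanying selection, is precisely the machinery developed in \cite{ACCP} (and recorded in Remark~\ref{Teo 6.13}). Once it is in place, the identity $L=\sum_s\Lambda_{a_s}P_{V_{a_s}}$ is the routine fiberwise verification described above.
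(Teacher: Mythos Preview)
The paper does not contain its own proof of this statement: Theorem~\ref{spectral theorem} is quoted from \cite{ACCP} as background material, with the words ``The following generalized Spectral Theorem was obtained in \cite{ACCP}\dots''. So there is no in-paper proof to compare against.

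That said, your outline is sound and is precisely the strategy one expects the proof in \cite{ACCP} to follow, as telegraphed by the preliminaries here: normality of $L$ passes to $R(\omega)$ via Theorem~\ref{adjoint L}, the finite-dimensional spectral theorem applies fiberwise, and the measurable selection of eigenvalue functions with the accompanying partition of $\sigma(V)$ is exactly the construction recorded in Remark~\ref{Teo 6.13}. Your verification of the formula $L=\sum_s\Lambda_{a_s}P_{V_{a_s}}$ for an arbitrary $s$-diagonalization is also correct; in particular the orthogonality argument for the $V_{a_s}$ (distinct eigenvalues of a normal operator give orthogonal eigenspaces, then integrate) is the right way to see that the $P_{V_{a_s}}$ are genuinely orthogonal projections, which the statement of the theorem implicitly requires. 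One small remark: when you invoke Proposition~\ref{prop-eigen} to conclude that each $\Lambda_{a_j}$ is an $s$-eigenvalue, that proposition as stated goes the other way; what you actually need is that the measurable range function $\omega\mapsto\ker(R(\omega)-\hat a_j(\omega)\mathcal I)$ is nonzero on a set of positive measure, hence its associated shift-invariant space $V_{a_j}$ is nonzero, which is immediate from the construction.
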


Finally, we add the next proposition which relates the $s$-diagonalization of a normal operator and its adjoint. 

\begin{proposition}\label{adjoint diag}
	Let $V$ be a finitely generated shift-invariant space and  $L:V\rightarrow V$ a bounded shift-preserving operator. 	If $L$ is normal, the following statements hold:
	\begin{enumerate}[\rm (i)]
		\item $L$ and $L^*$ are $s$-diagonalizable.
    		\item If $\Lambda_a$ is an $s$-eigenvalue of $L$, then $\Lambda_a^*$ is an $s$-eigenvalue of $L^*$ and $\ker(L^*-\Lambda_a^*)=V_a$. Furthermore, $\Lambda_a^*= \Lambda_b,$  where $b\in\ell^2(\Z^d)$ is defined by $b(j):=\overline{a(\text{-} j)}$ for $j\in \Z^d$.
		\item If $(V,L,a_1,\dots,a_r)$ is an $s$-diagonalization of $L$, then $(V,L^*,b_1,\dots,b_r)$ is an $s$-diagonalization of $L^*$, where $\Lambda_{b_s}=\Lambda_{a_s}^*$ for $s=1,\dots,r$.
	\end{enumerate}   
\end{proposition}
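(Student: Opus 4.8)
The plan is to derive parts (i) and (iii) from part (ii), which carries the essential content, using the generalized Spectral Theorem (Theorem \ref{spectral theorem}) and the description of the adjoint range operator in Theorem \ref{adjoint L}.

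For (ii), the first step is to identify the Hilbert-space adjoint of $\Lambda_a$. Since each $T_j$ is unitary on $V$ with $T_j^{*}=T_{-j}$, a direct computation gives $\Lambda_a^{*}=\sum_{j\in\Z^d}\overline{a(j)}\,T_{-j}=\Lambda_b$ with $b(j)=\overline{a(-j)}$; moreover $\hat b(\omega)=\overline{\hat a(\omega)}$, so $b$ is again of bounded spectrum (and in particular lies in $\ell^2(\Z^d)$). The crux of the argument is then the observation that the operator $M:=L-\Lambda_a$ on $V$ is \emph{normal}. Indeed, $L$ is normal by hypothesis; $\Lambda_a$ and $\Lambda_a^{*}=\Lambda_b$ commute with each other; and every shift-preserving operator---in particular $L$ and, by Theorem \ref{adjoint L}, $L^{*}$---commutes with every translation $T_j$, hence with $\Lambda_a$ and with $\Lambda_b$. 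Expanding $MM^{*}$ and $M^{*}M$ and using these commutations together with $LL^{*}=L^{*}L$ gives $MM^{*}=M^{*}M$. For a bounded normal operator one has $\|Mf\|=\|M^{*}f\|$ for all $f$, hence $\ker M=\ker M^{*}$; that is,
$$V_a=\ker(L-\Lambda_a)=\ker(L^{*}-\Lambda_a^{*})=\ker(L^{*}-\Lambda_b).$$
Since $\Lambda_a$ is an $s$-eigenvalue of $L$ we have $V_a\neq\{0\}$, so $\Lambda_b=\Lambda_a^{*}$ is an $s$-eigenvalue of $L^{*}$ with $s$-eigenspace exactly $V_a$, which proves (ii). (An equivalent fiberwise route: by Theorem \ref{adjoint L} the range operator of $L^{*}$ is $R^{*}(\omega)=R(\omega)^{*}$, which is normal a.e.; thus $\ker(R(\omega)-\hat a(\omega))=\ker(R(\omega)^{*}-\overline{\hat a(\omega)})=\ker(R^{*}(\omega)-\hat b(\omega))$ a.e., and Proposition \ref{prop-eigen}(ii) together with the one-to-one correspondence between shift-invariant spaces and range functions yields the same identity of spaces.)

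With (ii) available, the rest is short. For (i): $L$ is normal, hence $s$-diagonalizable by Theorem \ref{spectral theorem}; and $L^{*}$ is normal as well---its range operator $R(\omega)^{*}$ is normal a.e.---so Theorem \ref{spectral theorem} applies to $L^{*}$ too (alternatively, one may apply (iii) to any $s$-diagonalization of $L$). For (iii): let $(V,L,a_1,\dots,a_r)$ be an $s$-diagonalization of $L$, so $V=V_{a_1}\oplus\cdots\oplus V_{a_r}$ with each $\Lambda_{a_s}$ an $s$-eigenvalue of $L$; by (ii), each $\Lambda_{b_s}:=\Lambda_{a_s}^{*}$ (where $b_s(j)=\overline{a_s(-j)}$ is of bounded spectrum) is an $s$-eigenvalue of $L^{*}$ whose $s$-eigenspace is $\ker(L^{*}-\Lambda_{b_s})=V_{a_s}$. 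Substituting into the direct sum gives $V=\ker(L^{*}-\Lambda_{b_1})\oplus\cdots\oplus\ker(L^{*}-\Lambda_{b_r})$, i.e. $(V,L^{*},b_1,\dots,b_r)$ is an $s$-diagonalization of $L^{*}$ with $\Lambda_{b_s}=\Lambda_{a_s}^{*}$.

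I do not expect a serious obstacle here; the work is mostly bookkeeping---verifying that $\Lambda_a,\Lambda_b$ are well-defined bounded operators on $V$ that map $V$ into itself (guaranteed by the bounded-spectrum hypothesis, cf.\ \cite[Proposition 4.1]{ACCP}), that the adjoint in (ii) may legitimately be computed inside $V$ (both $V$ and $V^{\perp}$ are shift invariant, so $V$ reduces $\Lambda_a$), and that interchanging the operator-norm convergent series $\sum_j a(j)T_j$ with the inner product is valid. Once these are settled, the short commutator computation showing that $L-\Lambda_a$ is normal is the one genuinely load-bearing step, and everything else is formal.
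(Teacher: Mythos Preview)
Your proposal is correct and follows essentially the same route as the paper: identify $\Lambda_a^{*}=\Lambda_b$ with $b(j)=\overline{a(-j)}$, then use normality of $L-\Lambda_a$ to get $\ker(L-\Lambda_a)=\ker(L^{*}-\Lambda_b)$, and read off (i) and (iii) from the Spectral Theorem and (ii). The only cosmetic difference is that the paper computes the adjoint via the Fourier side ($\widehat{\Lambda_a f}=\hat a\hat f$ and $\overline{\hat a}=\hat b$) rather than via $T_j^{*}=T_{-j}$, and simply asserts that $L-\Lambda_a$ is normal where you spell out the commutator check.
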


\begin{proof}
	Since $L$ is normal, so is $L^*$ and by Theorem \ref{spectral theorem}, $L$ and $L^*$ are $s$-diagonalizable, which proves (i).
	
	Furthermore, assume that $\Lambda_a$ is an $s$-eigenvalue of $L$. Let $f,g\in V$, we have that
	\begin{equation}\label{adjoint Lambda_a}
		\left\langle \Lambda_a f, g\right\rangle = \left\langle \hat{a}\hat{f},\hat{g}\right\rangle = \left\langle \hat{f}, \overline{\hat{a}}\hat{g}\right\rangle.
	\end{equation}
	On the other hand, it is easy to see that $$\overline{\hat{a}}(\w)=\sum_{j\in\Z^d}\overline{a(\text{-} j)}e_j(\w).$$
	Thus, if we define $b\in\ell^2(\Z^d)$ by $b(j):=\overline{a(\text{-} j)}$ for $j\in \Z^d$, we deduce from \eqref{adjoint Lambda_a} that $\left\langle \Lambda_a f, g\right\rangle = 	\left\langle f, \Lambda_b g\right\rangle,$
	that is $\Lambda_a^* = \Lambda_b$.
	
	On the other hand, $L-\Lambda_a$ is a normal operator and so $\{0\}\neq V_a = \ker(L-\Lambda_a) = \ker\left((L-\Lambda_a)^*\right) = \ker \left(L^*-\Lambda_b\right)$, from which we conclude that $\Lambda_b$ is an $s$-eigenvalue of $L^*$, as we wanted to see in (ii).
	
	For (iii), it only remains to observe that if $(V,L,a_1,\dots,a_r)$ is an $s$-diagonalization of $L$, then we have that $V=V_{a_1}\oplus\dots\oplus V_{a_r}$, which is also a decomposition into $s$-eigenspaces of $L^*$ since $\ker(L^*-\Lambda_{b_s})=V_{a_s}$ for every $s=1,\dots,r$ by item (ii).
\end{proof}

\section{A Dynamical Sampling problem for shift-preserving operators}\label{section-SIS}

Given a set of functions $\mathcal F= \{f_i\,:\,i\in I\}$ in a Hilbert space $\mathcal H$ and a bounded operator $D:\mathcal H\to\mathcal H$, the dynamical sampling problem consists on finding conditions on $D$ and $\mathcal F$ for $\{D^{j}f_i\,:\,i\in I, j\in K\}$ to be a frame of $\mathcal H$, where $I,K\subseteq\N\cup\{0\}$.  

The finite-dimensional case of this problem has been solved in \cite{ACMT}. Whereas, for the infinite-dimensional case, a characterization theorem was given for normal operators iterating a finite set of functions in \cite{ACMT} and \cite{CMPP}. Moreover, the following condition  was also proved in \cite{CMPP} for the finite-dimensional setting. 
\begin{theorem}\label{DS finite dimension}
	Let $\mathcal H$ be an $n$-dimensional Hilbert space and let $D:\mathcal H\to\mathcal H$ be a linear transformation. Let $I=\{1,\dots,m\}$, $K=\{0,\dots,n-1\}$ and let $\{f_i\,:\,i\in I\}\subseteq \mathcal H$. Then, $\{D^{j}f_i\,:\,i\in I, j\in K\}$ is a frame of $\mathcal H$ if and only if for each $\lambda\in\text{spec}(D^*)$, $$\left\{P_{\ker(D^*-\lambda \mathcal I)}f_i\,:\,i\in I\right\}$$
	is a frame of $\ker(D^*-\lambda \mathcal I)$. 
 Here, $\text{spec}(D^*)$ denotes the set of eigenvalues of $D^*$.
\end{theorem}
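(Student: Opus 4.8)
The plan is to reduce everything to a statement about spanning sets, exploiting that in a finite-dimensional space a finite family is a frame precisely when it spans. Thus $\{D^{j}f_i : i\in I,\, j\in K\}$ is a frame of $\mathcal H$ if and only if $\mathrm{span}\{D^{j}f_i : i\in I,\, 0\le j\le n-1\}=\mathcal H$, and likewise $\{P_{\ker(D^*-\lambda\mathcal I)}f_i : i\in I\}$ is a frame of $\ker(D^*-\lambda\mathcal I)$ if and only if it spans that eigenspace. So the whole theorem becomes an equivalence between two spanning conditions, which I will prove by contraposition, i.e. by showing the two corresponding ``failure'' conditions coincide.

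First I would record, via the Cayley--Hamilton theorem (equivalently, that the minimal polynomial of $D$ has degree $\le n$), that for each $i$ the cyclic subspace $Z_i:=\mathrm{span}\{D^{j}f_i : j\ge 0\}$ already equals $\mathrm{span}\{D^{j}f_i : 0\le j\le n-1\}$; this is exactly why $K=\{0,\dots,n-1\}$ is the right index set. Hence the left-hand frame condition is equivalent to $\sum_{i\in I}Z_i=\mathcal H$, i.e. to the nonexistence of a nonzero $g\in\mathcal H$ orthogonal to every $D^{j}f_i$ with $j\ge 0$, $i\in I$. Moving the power of $D$ across the inner product, $\langle D^{j}f_i,g\rangle=\langle f_i,(D^*)^{j}g\rangle$, this says: there is no nonzero $g$ such that the $D^*$-cyclic subspace $W_g:=\mathrm{span}\{(D^*)^{j}g : j\ge 0\}$ is orthogonal to all of $f_1,\dots,f_m$. (Note $W_g$ is $D^*$-invariant and nonzero whenever $g\ne 0$.)

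The heart of the matter is then the equivalence: \emph{there exists a nonzero $g$ with $W_g\perp\{f_1,\dots,f_m\}$ if and only if there exists an eigenvector $v$ of $D^*$, say $D^*v=\lambda v$ with $v\ne 0$, such that $v\perp\{f_1,\dots,f_m\}$.} The implication ``$\Leftarrow$'' is immediate, since an eigenvector generates the one-dimensional $D^*$-cyclic subspace $W_v=\mathrm{span}\{v\}$. For ``$\Rightarrow$'', the subspace $W_g$ is nonzero and $D^*$-invariant, so—working over $\mathbb C$, as is the case in the shift-invariant setting—the restriction $D^*|_{W_g}$ has an eigenvalue and hence a nonzero eigenvector $v\in W_g$; then $D^*v=\lambda v$ in $\mathcal H$ (so $\lambda\in\mathrm{spec}(D^*)$) and $v\perp\{f_1,\dots,f_m\}$ because $W_g$ is. This step—extracting an eigenvector from a nonzero invariant subspace—is the only genuinely non-formal ingredient, and I would flag it as the main point; it is also precisely where complex scalars are needed, and where a real-scalar analogue of the statement would break.

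Finally I would translate the eigenvector obstruction back into the eigenspace-frame language. If $v\in\ker(D^*-\lambda\mathcal I)$ then $\langle v,P_{\ker(D^*-\lambda\mathcal I)}f_i\rangle=\langle P_{\ker(D^*-\lambda\mathcal I)}v,f_i\rangle=\langle v,f_i\rangle$, so $v$ is orthogonal to every projected vector $P_{\ker(D^*-\lambda\mathcal I)}f_i$ exactly when it is orthogonal to every $f_i$. Therefore the existence of a nonzero eigenvector $v$ of $D^*$ (with eigenvalue $\lambda$) orthogonal to all the $f_i$ is equivalent to the existence of some $\lambda\in\mathrm{spec}(D^*)$ for which $\{P_{\ker(D^*-\lambda\mathcal I)}f_i : i\in I\}$ fails to span $\ker(D^*-\lambda\mathcal I)$. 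Chaining the equivalences—$\{D^{j}f_i\}$ is not a frame $\iff$ some nonzero $g$ has $W_g\perp\{f_i\}$ $\iff$ some eigenvector of $D^*$ is orthogonal to all $f_i$ $\iff$ for some $\lambda\in\mathrm{spec}(D^*)$ the projected family is not a frame of $\ker(D^*-\lambda\mathcal I)$—and negating gives exactly the claimed characterization.
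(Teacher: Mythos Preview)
Your argument is correct. Note, however, that the paper does not actually prove this theorem: it is quoted from \cite{CMPP} (see also \cite{ACMT}) and stated without proof, so there is no ``paper's own proof'' to compare against directly.

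What the paper \emph{does} prove are the two quantitative refinements in Section~\ref{sec-bounds}: Theorem~\ref{teo1} gives the forward implication with explicit bounds $A/C_\lambda$, $B/C_\lambda$, and Theorem~\ref{teo1converse} gives the converse with explicit bounds, but only under the additional hypothesis that $R$ is normal (so that $\mathcal H$ decomposes orthogonally into eigenspaces). Your proof, by contrast, handles both directions simultaneously via the spanning/orthogonality contrapositive and does not need normality for the converse; the key observation---that any nonzero $D^*$-invariant subspace over $\mathbb C$ contains an eigenvector of $D^*$---is exactly what replaces the spectral decomposition used in Theorem~\ref{teo1converse}. The tradeoff is that your purely qualitative argument yields no frame-bound estimates, and those estimates are precisely what the paper needs to obtain uniformity across fibers in Section~\ref{section-DS-SP}.
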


In what follows, we formulate a dynamical sampling problem for shift-preserving operators. Let $V$ be a finitely generated shift-invariant space whose length is $\ell$. Suppose that $I=\{1,\dots,m\}$ is a finite index set and $K=\{0,\dots,\ell-1\}$. Let $\{f_i\,:\,i\in I\}$ be a set of functions in $V$ and let $L:V\to V$ be a bounded shift-preserving operator. We want to give conditions for the system
\begin{equation}\label{iteration set}
\left\{L^{j}f_i\,:\,i\in I, j\in K\right\}
\end{equation} to be a frame generator set of $V$. 

Our approach on solving this problem is through fiberization techniques. Let $J$ be the range function of $V$ and let $R$ be the associated range operator of $L$. Recall that by Theorem \ref{riesz basis fiber}, the system in \eqref{iteration set} is a frame generator set of $V$ if and only if its fibers form a frame of $J(\w)$ with uniform bounds for a.e. $\w\in [0,1)^d$. On the other hand, by the intertwinig property of $\mathcal T$ and $R$  given in equation \eqref{prop of R}, we have that $\mathcal T(L^jf_i)(\w)=R(\w)^j(\mathcal Tf_i(\w))$. 

This allows us to reduce our problem to a finite-dimensional dynamical sampling problem by studying conditions for the system
\begin{equation*}
\left\{R(\omega)^{j}(\mathcal{T}f_{i}(\omega)): i\in I, \,j\in K\right\}
\end{equation*}
to be a frame of $J(\w)$ for a.e. $\w\in[0,1)^d$. For this, one would like to apply Theorem \ref{DS finite dimension} with $D=R(\w)$ at almost every $\w\in[0,1)^d$. To translate back results from the range operator to the shift-preserving operator $L$, we need uniformity in the frame bounds. However, in Theorem \ref{DS finite dimension} frame bounds estimates were not provided. 
For this, we need to obtain frame bounds for the set of iterations in terms of the frame bounds 
of the frame of the projections of the elements of $\mathcal F$ and reciprocally.

We divide this section in two subsections. The first one is devoted to give frame bounds estimates for the finite-dimensional case of dynamical sampling. Then, in the second subsection, we apply these results to solve the problem of dynamical sampling for shift-preserving operators.

\subsection{Finite-dimensional dynamical sampling} \label{sec-bounds}

Along this subsection we will assume that $\mathcal H$ is an $n$-dimensional complex Hilbert space and $R:\mathcal H\to \mathcal H$ is a linear transformation. Let $I=\{1,\dots,m\}$ and $K=\{0,\dots, k\}$ where $k\geq n-1$. Moreover, for $\lambda\in\mathbb C$ we will denote by $E_\lambda = \ker(R^*-\lambda \mathcal I)$ and by $P_{E_\lambda}$ the orthogonal projection of $\mathcal H$ onto $E_\lambda$. As before, $\text{spec}(R^*)$ will denote the set of eigenvalues of $R^*$.

To obtain frame bounds estimates for Theorem \ref{DS finite dimension}, we need to study each direction separately. In one direction, we have the following result.

\begin{theorem}\label{teo1}
	If $\left\{R^{j}f_{i}: i\in I, j\in K\right\}$ is a frame of $\mathcal H$ with frame bounds $A,B>0$, then for every $\lambda\in\text{spec}(R^*)$, we have that
	$\left\{P_{E_\lambda}f_{i}:i\in I\right\}$ is a frame for $E_{\lambda}$, with bounds $A/C_{\lambda}$ and $B/C_{\lambda}$, where $C_{\lambda} = \sum_{j\in K} |\lambda|^{2j}$.
\end{theorem}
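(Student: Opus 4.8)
The plan is to exploit the defining relation of $E_\lambda = \ker(R^*-\lambda\mathcal I)$: if $v\in E_\lambda$, then $R^*v = \lambda v$, and hence $(R^*)^j v = \lambda^j v$ for every $j\geq 0$. The whole argument rests on testing the frame inequality for $\{R^j f_i : i\in I, j\in K\}$ only against vectors $v$ lying in the subspace $E_\lambda$, where the iteration of $R^*$ degenerates into multiplication by a scalar.

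First I would fix $\lambda\in\text{spec}(R^*)$ (so that $E_\lambda\neq\{0\}$ and the statement is non-vacuous) and take an arbitrary $v\in E_\lambda$. For each $i\in I$ and $j\in K$, moving $R^j$ to the other side of the inner product and using $(R^*)^jv=\lambda^j v$ together with the fact that $v\in E_\lambda$ implies $\langle f_i, v\rangle = \langle P_{E_\lambda} f_i, v\rangle$, I get the key identity
\begin{equation*}
\langle R^j f_i, v\rangle = \langle f_i, (R^*)^j v\rangle = \overline{\lambda}^{\,j}\,\langle f_i, v\rangle = \overline{\lambda}^{\,j}\,\langle P_{E_\lambda} f_i, v\rangle .
\end{equation*}
Squaring the modulus and summing over $i\in I$ and $j\in K$ then factors the $\lambda$-dependence out:
\begin{equation*}
\sum_{i\in I}\sum_{j\in K} \left|\langle R^j f_i, v\rangle\right|^2
= \left(\sum_{j\in K}|\lambda|^{2j}\right)\sum_{i\in I}\left|\langle P_{E_\lambda} f_i, v\rangle\right|^2
= C_\lambda \sum_{i\in I}\left|\langle P_{E_\lambda} f_i, v\rangle\right|^2 .
\end{equation*}

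Next, since $v\in E_\lambda\subseteq\mathcal H$, the frame hypothesis gives $A\|v\|^2 \leq \sum_{i,j}|\langle R^j f_i, v\rangle|^2 \leq B\|v\|^2$. Combining this with the displayed equality and dividing by $C_\lambda$ (which is legitimate because $C_\lambda = \sum_{j\in K}|\lambda|^{2j}\geq 1 > 0$, the $j=0$ term contributing $1$), I obtain
\begin{equation*}
\frac{A}{C_\lambda}\,\|v\|^2 \;\leq\; \sum_{i\in I}\left|\langle P_{E_\lambda} f_i, v\rangle\right|^2 \;\leq\; \frac{B}{C_\lambda}\,\|v\|^2
\end{equation*}
for all $v\in E_\lambda$, which is precisely the statement that $\{P_{E_\lambda} f_i : i\in I\}$ is a frame of $E_\lambda$ with bounds $A/C_\lambda$ and $B/C_\lambda$.

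There is no serious obstacle here; the argument is a short computation. The only point that needs care is the observation that one should restrict the frame inequality to the subspace $E_\lambda$ (rather than all of $\mathcal H$) and that eigenvectors of $R^*$ turn the orbit $\{R^j f_i\}$, tested against them, into scalar multiples of a single inner product $\langle P_{E_\lambda}f_i, v\rangle$ — this is what makes the constant $C_\lambda$ appear and pull out of the sum. One should also note explicitly that $\lambda\in\text{spec}(R^*)$ ensures $E_\lambda\neq\{0\}$, so the conclusion is meaningful.
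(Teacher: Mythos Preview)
Your proof is correct and follows essentially the same approach as the paper: restrict the frame inequality to vectors in $E_\lambda$, use $(R^*)^j v = \lambda^j v$ to factor out $C_\lambda$, and conclude. Your write-up is in fact slightly more explicit than the paper's (you note $C_\lambda\geq 1$ and that $E_\lambda\neq\{0\}$), but the argument is the same short computation.
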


\begin{proof}
	Let $\lambda\in\mathbb C$ be an eigenvalue of $R^*$ and $f\in E_\lambda$. It is suffices to observe that
	\begin{align*}
	\sum_{j\in K}\sum_{i\in I} |\langle f, R^j f_i\rangle|^2 &= \sum_{j\in K}\sum_{i\in I}  |\langle {R^*}^j f, f_i\rangle|^2\\
	&=\sum_{j\in K}\sum_{i\in I}  |\langle \lambda^j f,f_i\rangle|^2\\
	&=\sum_{j\in K} |\lambda|^{2j}\sum_{i\in I}  |\langle f, P_{E_\lambda}f_i\rangle|^2.
	\end{align*}
\end{proof}

In the other direction, more work is required. We will ask for $R$ to be a normal linear transformation in order to have a decomposition of $\mathcal H$ into an orthogonal sum of eigenspaces. The proof of the next theorem is inspired by \cite[Theorem 5.1.]{CHP}, where frame bounds estimates where provided for the infinite-dimensional dynamical sampling problem, assuming that the operator is normal and only one function is being iterated. For this, we make use of a quite technical lemma that we show below.

We will denote by $\mathcal P_k$ the Hilbert space of the polynomials with complex coefficients of degree less than or equal to $k$, provided with the inner product
\begin{equation*}\label{norm Pk}
\langle p,q \rangle = \sum_{j=0}^k c_j\overline{d_j},
\end{equation*}
where $p(z)=\sum_{j=0}^{k} c_jz^j$ and $q(z)=\sum_{j=0}^{k} d_jz^j$.
The space $\mathcal P_k^{|I|}$ is the cartesian product of $|I|$ copies of $\mathcal P_k$ endowed with the norm
$$\|(p_i)_{i\in I}\|_{\mathcal P_k^{|I|}} = \left(\sum_{i\in I} \|p_i\|_{\mathcal P_k}^2\right)^{1/2} = \left(\sum_{i\in I} \sum_{j\in K} |c_{ij}|^2\right)^{1/2}.$$

\begin{lemma}\label{bound M lemma}	
Let $\lambda_1,\dots,\lambda_r$ be all different complex numbers. We define  $T:\mathcal P_k^{|I|} \rightarrow \ell^2(I\times \{1,\dots,r\})$ as
\begin{equation}\label{T_lambda,m}
	(p_i)_{i\in I}\mapsto (p_i(\lambda_s))_{i\in I, s\in \{1,\dots,r\}},
\end{equation} 
and $M:\ell^2(I\times \{1,\dots,r\})\rightarrow \mathcal P_k^{|I|}$ as the operator given by 
\begin{equation}\label{M}
	(w_{is})_{i\in I, s\in\{1,\dots, r\}} \mapsto (p_i)_{i\in I},
\end{equation} 
where for every $i\in I$, $p_i$ denotes the Lagrange polynomial of degree $r-1$ which interpolates $(\lambda_s,w_{is})$, $s=1,\dots,r$. Then, the following statements hold:
\begin{enumerate}[\rm(i)]
	\item $T$ is surjective.
	\item $TM=\mathcal{I}.$
	\item The operator norm of $T$ and $M$ have the following bounds:
	\begin{align*}
	\|T\|&\leq \left(r\sum_{j=0}^k \beta^{2j}\right)^{1/2}\text{ and }\\
	\|M\|&\leq \left(\frac{r}{\alpha} \sum_{u=0}^{r-1}\binom{r-1}{u}^2\beta^{2u}\right)^{1/2}
	\end{align*}
	with 
	$\alpha = \min_{1\leq s\leq r} \prod_{\substack{u=1\\ u\neq s}}^{r} |\lambda_s-\lambda_u|^2
	\text{ and }\,
	\beta = \max_{1\leq s\leq r} |\lambda_s|.
	$
\end{enumerate}
\end{lemma}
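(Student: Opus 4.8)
The plan is to deduce (i) and (ii) directly from the interpolation property built into the definition of $M$, and then to establish the norm bounds in (iii) by two applications of the Cauchy--Schwarz inequality, the only genuine work being an estimate on the coefficients of the Lagrange basis polynomials.

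First I would verify (ii). Given $(w_{is})_{i\in I,\, s\in\{1,\dots,r\}}\in\ell^2(I\times\{1,\dots,r\})$, each component $p_i$ of $M\big((w_{is})\big)$ is by definition the unique polynomial of degree at most $r-1$ satisfying $p_i(\lambda_s)=w_{is}$ for $s=1,\dots,r$; this polynomial lies in $\mathcal P_k$ because $r-1\leq k$ in our setting. Applying $T$ then returns exactly $\big(p_i(\lambda_s)\big)_{i,s}=(w_{is})_{i,s}$, so $TM=\mathcal I$. Statement (i) follows at once: for any $y\in\ell^2(I\times\{1,\dots,r\})$ we have $y=T(My)$, hence $y$ lies in the range of $T$, so $T$ is surjective.

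Next I would bound $\|T\|$. Writing $p_i(z)=\sum_{j=0}^k c_{ij}z^j$, the Cauchy--Schwarz inequality gives $|p_i(\lambda_s)|^2\leq \|p_i\|_{\mathcal P_k}^2\sum_{j=0}^k|\lambda_s|^{2j}\leq \|p_i\|_{\mathcal P_k}^2\sum_{j=0}^k\beta^{2j}$. Summing over the $r$ nodes $\lambda_1,\dots,\lambda_r$ and then over $i\in I$ yields $\big\|T\big((p_i)_{i\in I}\big)\big\|^2\leq r\Big(\sum_{j=0}^k\beta^{2j}\Big)\|(p_i)_{i\in I}\|_{\mathcal P_k^{|I|}}^2$, which is the claimed bound on $\|T\|$.

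Finally I would bound $\|M\|$, which I expect to be the main point. Write the Lagrange basis polynomials $\ell_s(z)=\prod_{u\neq s}(z-\lambda_u)\big/\prod_{u\neq s}(\lambda_s-\lambda_u)$, so that the components of $M\big((w_{is})\big)$ are $p_i=\sum_{s=1}^r w_{is}\,\ell_s$. Expanding the numerator $\prod_{u\neq s}(z-\lambda_u)$ in terms of the elementary symmetric functions of $\{\lambda_u\}_{u\neq s}$ and using that the $m$-th elementary symmetric function of $r-1$ complex numbers of modulus at most $\beta$ has modulus at most $\binom{r-1}{m}\beta^m$, one obtains $\|\ell_s\|_{\mathcal P_k}^2\leq \alpha^{-1}\sum_{u=0}^{r-1}\binom{r-1}{u}^2\beta^{2u}$, the denominator contributing the factor $\alpha_s^{-1}\leq\alpha^{-1}$ where $\alpha_s=\prod_{u\neq s}|\lambda_s-\lambda_u|^2$. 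Then, by the triangle inequality followed by Cauchy--Schwarz in the sum over $s$, $\|p_i\|_{\mathcal P_k}^2\leq \Big(\sum_{s=1}^r|w_{is}|^2\Big)\sum_{s=1}^r\|\ell_s\|_{\mathcal P_k}^2\leq \frac{r}{\alpha}\Big(\sum_{u=0}^{r-1}\binom{r-1}{u}^2\beta^{2u}\Big)\sum_{s=1}^r|w_{is}|^2$; summing over $i\in I$ gives the stated bound on $\|M\|$. The only delicate ingredient is the coefficient estimate for the numerator polynomials; everything else is routine bookkeeping.
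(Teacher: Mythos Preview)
Your proof is correct and follows essentially the same approach as the paper: surjectivity and $TM=\mathcal I$ come directly from Lagrange interpolation, the bound on $\|T\|$ from Cauchy--Schwarz on $|p_i(\lambda_s)|$, and the bound on $\|M\|$ from the triangle inequality plus Cauchy--Schwarz together with the elementary-symmetric-function estimate on the coefficients of the Lagrange basis polynomials. The only cosmetic difference is that you deduce (i) from (ii), whereas the paper argues (i) directly by constructing the interpolant; this is the same content.
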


\begin{proof}
	Observe that given any vector $(w_1,\dots,w_r)\in \mathbb C^r$, one can construct the Lagrange polynomial of degree $r-1$ which interpolates the points $(\lambda_1,w_1),\dots,$ $(\lambda_r,w_r)$. This proves that $T$ is surjective.
	The relation $TM=\mathcal{I}$ follows from the definition of $T$ and $M$.
	
	To estimate the norm of $T$, let $(p_i)_{i\in I}\in \mathcal P_k^{|I|}$, using that for every $s=1,\dots, r$, $|\lambda_s|\leq \beta$, we see that
	\begin{align*}
	\|T (p_i)_{i\in I}\|_{\ell^2(I\times \{1,\dots,r\})} &= \left( \sum_{i\in I}\sum_{s=1}^{r} |p_i(\lambda_s)|^2\right)^{1/2}
	=\left(\sum_{i\in I}\sum_{s=1}^{r}\left|\sum_{j=0}^{k} c_{ij}\lambda_s^j\right|^2\right)^{1/2} \\
	&\leq\left(\sum_{i\in I}\sum_{s=1}^{r}\left(\sum_{j=0}^{k} |c_{ij}||\lambda_s|^j\right)^2\right)^{1/2}\\
	&\leq\left(\sum_{i\in I}\sum_{s=1}^{r}\left(\sum_{j=0}^{k}|c_{ij}|^2\right)\left(\sum_{j=0}^{k} |\lambda_s|^{2j} \right)\right)^{1/2}\\
	&\leq \|(p_i)_{i\in I}\|_{\mathcal P_n^{|I|}} \left(r\sum_{j=0}^k \beta^{2j}\right)^{1/2}.
	\end{align*}
	
	Now, let $w=(w_{is})_{i\in I, s\in\{1,\dots, r\}}\in\ell^2(I\times \{1,\dots,r\})$.
	Recall that for every $i\in I$, given the points $(\lambda_s,w_{is})$, $s=1,\dots,r$, the Lagrange polynomial which interpolates them is given by $p_i(z) = \sum_{s=1}^{r} w_{is}B_s(z)$, 		where 
	$$B_s(z) = \prod_{\substack{u=1\\ u\neq s}}^{r} \frac{z-\lambda_u}{\lambda_s-\lambda_u},$$
	then,
	\begin{align}
		\|M(w)\|_{\mathcal{P}_k^{|I|}} &= \left\|(p_i)_{i\in I}\right\|_{\mathcal{P}_k^{|I|}}
		= \left(\sum_{i\in I} \left\|\sum_{s=1}^{r} w_{is}B_s\right\|^2_{\mathcal P_k}\right)^{1/2}\nonumber \\
		&\leq \left(\sum_{i\in I} \left(\sum_{s=1}^{r} |w_{is}|\|B_s\|_{\mathcal P_k}\right)^2\right)^{1/2} \nonumber \\
		&\leq\left(\sum_{i\in I} \left(\sum_{s=1}^{r} |w_{is}|^2\right)\left(\sum_{s=1}^{r}\|B_s\|^2_{\mathcal P_k}\right)\right)^{1/2} \nonumber \\
		&=\left(\sum_{s=1}^{r}\|B_s\|^2_{\mathcal P_k}\right)^{1/2}\|w\|_{\ell^2(I\times \{1,\dots,r\})}.\label{bound M}
	\end{align} 
	
	Furthermore, since for every $s=1,\dots, r$, $|\lambda_s|\leq \beta$, then 
	\begin{align*}
		\|B_s\|^2_{\mathcal P_k} &= \left\|\prod_{\substack{u=1\\ u\neq s}}^{r} \frac{z-\lambda_u}{\lambda_s-\lambda_u}\right\|^2_{\mathcal P_n}=\frac{1}{\prod_{u\neq s} |\lambda_s-\lambda_u|^2} \left\|\prod_{u\neq s} (z-\lambda_u)\right\|^2_{\mathcal P_n}\\
		&\leq \frac{1}{\alpha} \left(1+\left|\sum_{u\neq s} \lambda_u\right|^2 + \left|\sum_{\substack{1\leq u<v\leq r \\ u,v\neq s}} \lambda_u\lambda_v\right|^2 + \dots + \left|\prod_{u\neq s} \lambda_u \right|^2 \right)\\
		&\leq  \frac{1}{\alpha}\left(1 + (r-1)^2\beta^2 + \binom{r-1}{2}^2 \beta^{4} + \dots + \beta^{2(r-1)} \right)\\
		&= \frac{1}{\alpha}\sum_{u=0}^{r-1}\binom{r-1}{u}^2\beta^{2u}.
	\end{align*}
	
	Finally, it follows from the latter and (\ref{bound M}) that
	\begin{equation*}
		\|M\|\leq \left(  \frac{r}{\alpha}\sum_{u=0}^{r-1}\binom{r-1}{u}^2\beta^{2u}\right)^{1/2}.
	\end{equation*}

\end{proof}

\begin{remark}
	Observe that using the  equivalence between norms $\|\cdot\|_1$ and $\|\cdot\|_2$, we can obtain instead the following bound
	\begin{equation*}
		\|B_s\|^2_{\mathcal P_k} \leq \frac{1}{\alpha}\left(\sum_{u=0}^{r-1}\binom{r-1}{u}\beta^{u}\right)^2 = \frac{1}{\alpha} \left( 1+\beta\right)^{2r},
	\end{equation*}
	and therefore,
	\begin{equation*}\|M\|\leq 			\left(\frac{r}{\alpha}\right)^{1/2} \left( 1+\beta\right)^{r}.
	\end{equation*}
\end{remark}

\begin{theorem}\label{teo1converse} 
	
	Assume that $R$ is normal and let $\lambda_1,\dots,\lambda_r\in\mathbb C$ be such that $R=\sum_{s=1}^r \lambda_s P_{E_{\lambda_s}}$.
	If for every $s=1,\dots,r$, $\left\{P_{E_{\lambda_s}}f_{i}:i\in I\right\}$ is a frame for $E_{\lambda_s}$ with frame bounds $A_s,B_s>0$, then $\left\{R^{j}f_{i}: i\in I,j\in K\right\}$ is a frame for $\mathcal H$ with bounds
	$$A\left(\frac{r}{\alpha}\sum_{u=0}^{r-1}\binom{r-1}{u}^2\|R\|^{2u}\right)^{-1}\quad \text{and}\qquad  B \left(r\sum_{j=0}^k \|R\|^{2j}\right),$$
		where  
	\begin{equation}\label{alpha_lambda}
	\alpha = \min_{1\leq s\leq r} \prod_{\substack{u=1\\ u\neq s}}^{r} |\lambda_s-\lambda_u|^2>0,	
	\end{equation}
	and $A=\min_{s} A_s$, $B= \max_{s} B_s$.

\end{theorem}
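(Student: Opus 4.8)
The plan is to fix an arbitrary $g\in\mathcal H$ and estimate $\sum_{i\in I}\sum_{j\in K}|\langle g,R^{j}f_{i}\rangle|^{2}$ from above and below by the two constants in the statement times $\|g\|^{2}$. Since $R$ is normal, so is $R^{*}$, hence the eigenspaces $E_{\lambda_{1}},\dots,E_{\lambda_{r}}$ are mutually orthogonal with $\mathcal H=\bigoplus_{s=1}^{r}E_{\lambda_{s}}$; moreover, by the very definition $E_{\lambda_{s}}=\ker(R^{*}-\lambda_{s}\mathcal I)$, the operator $R^{*}$ acts as the scalar $\lambda_{s}$ on $E_{\lambda_{s}}$, and $\|R\|=\|R^{*}\|=\max_{1\le s\le r}|\lambda_{s}|=:\beta$. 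Writing $g_{s}=P_{E_{\lambda_{s}}}g$ and $w_{is}=\langle g_{s},P_{E_{\lambda_{s}}}f_{i}\rangle=\langle g_{s},f_{i}\rangle$, a direct computation gives the identity
\[
\langle g,R^{j}f_{i}\rangle=\langle (R^{*})^{j}g,f_{i}\rangle=\sum_{s=1}^{r}\lambda_{s}^{j}\,w_{is},\qquad j\in K .
\]
By hypothesis $\{P_{E_{\lambda_{s}}}f_{i}:i\in I\}$ is a frame of $E_{\lambda_{s}}$ with bounds $A_{s},B_{s}$, so $A_{s}\|g_{s}\|^{2}\le\sum_{i\in I}|w_{is}|^{2}\le B_{s}\|g_{s}\|^{2}$; summing over $s$ and using $\|g\|^{2}=\sum_{s}\|g_{s}\|^{2}$ yields
\[
A\|g\|^{2}\ \le\ \sum_{i\in I}\sum_{s=1}^{r}|w_{is}|^{2}\ \le\ B\|g\|^{2},
\]
with $A=\min_{s}A_{s}$ and $B=\max_{s}B_{s}$. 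It therefore suffices to compare $\sum_{i,j}|\langle g,R^{j}f_{i}\rangle|^{2}$ with $\sum_{i,s}|w_{is}|^{2}$.

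For the upper bound I would apply the Cauchy--Schwarz inequality to the inner sum over $s$ in the identity above, using $|\lambda_{s}|\le\beta=\|R\|$ and that the $s$-sum has $r$ terms, which gives $\sum_{i,j}|\langle g,R^{j}f_{i}\rangle|^{2}\le r\big(\sum_{j=0}^{k}\|R\|^{2j}\big)\sum_{i,s}|w_{is}|^{2}\le B\,r\big(\sum_{j=0}^{k}\|R\|^{2j}\big)\|g\|^{2}$, which is precisely the claimed upper frame bound (in disguise this is the estimate $\|T\|\le(r\sum_{j=0}^{k}\beta^{2j})^{1/2}$ of Lemma~\ref{bound M lemma}).

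For the lower bound the identity must be inverted. Here I would use that $K\supseteq\{0,1,\dots,r-1\}$, which holds because $k\ge n-1\ge r-1$. Fixing $i$ and reading the identity for $j=0,\dots,r-1$ produces an $r\times r$ linear system in the unknowns $w_{i1},\dots,w_{ir}$ whose matrix is the Vandermonde matrix of the distinct nodes $\lambda_{1},\dots,\lambda_{r}$, and whose inverse has rows given by the coefficient vectors of the Lagrange basis polynomials $B_{s}(z)=\prod_{u\neq s}\frac{z-\lambda_{u}}{\lambda_{s}-\lambda_{u}}$ appearing in the definition of the operator $M$ in Lemma~\ref{bound M lemma}. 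Thus, writing $B_{s}(z)=\sum_{j}\beta_{sj}z^{j}$, we get $w_{is}=\sum_{j=0}^{r-1}\beta_{sj}\langle g,R^{j}f_{i}\rangle$, and Cauchy--Schwarz gives
\[
|w_{is}|^{2}\ \le\ \|B_{s}\|_{\mathcal P_{k}}^{2}\sum_{j\in K}|\langle g,R^{j}f_{i}\rangle|^{2}.
\]
Summing over $i$ and $s$ yields $\sum_{i,s}|w_{is}|^{2}\le\big(\sum_{s=1}^{r}\|B_{s}\|_{\mathcal P_{k}}^{2}\big)\sum_{i,j}|\langle g,R^{j}f_{i}\rangle|^{2}$, and then I would simply invoke the bound $\sum_{s=1}^{r}\|B_{s}\|_{\mathcal P_{k}}^{2}\le\frac{r}{\alpha}\sum_{u=0}^{r-1}\binom{r-1}{u}^{2}\beta^{2u}$ established inside the proof of Lemma~\ref{bound M lemma}. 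Combined with $A\|g\|^{2}\le\sum_{i,s}|w_{is}|^{2}$ and $\beta=\|R\|$, this gives exactly the asserted lower bound $A\big(\frac{r}{\alpha}\sum_{u=0}^{r-1}\binom{r-1}{u}^{2}\|R\|^{2u}\big)^{-1}$; since $A>0$ and $\alpha>0$ (the $\lambda_{s}$ being pairwise distinct) the lower bound is strictly positive, so the finite family $\{R^{j}f_{i}:i\in I,\,j\in K\}$ is indeed a frame of $\mathcal H$.

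The two Cauchy--Schwarz estimates are routine; the step carrying the real content is the lower bound, namely the observation that recovering $(w_{is})_{s}$ from $(\langle g,R^{j}f_{i}\rangle)_{j}$ is the inversion of a Vandermonde map whose operator norm is governed by $\sum_{s}\|B_{s}\|_{\mathcal P_{k}}^{2}$ --- precisely the quantity already controlled in Lemma~\ref{bound M lemma}, which is why the finite-dimensional lemma is stated the way it is. A minor point to watch is the bookkeeping of complex conjugates in the representation $R=\sum_{s}\lambda_{s}P_{E_{\lambda_{s}}}$ with $E_{\lambda_{s}}=\ker(R^{*}-\lambda_{s}\mathcal I)$, but since only $|\lambda_{s}|$ and the pairwise differences $|\lambda_{s}-\lambda_{u}|$ enter the final constants, this has no bearing on the argument.
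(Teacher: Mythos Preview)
Your argument is correct and reaches exactly the bounds stated. The mathematical core---expressing the frame quantity in terms of the Vandermonde evaluation of the $w_{is}$ and controlling the inverse via the Lagrange polynomials $B_s$---is the same as in the paper; only the packaging differs.

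The paper's proof works with the synthesis operator and factorizes it as $U=CTP$, where $P$ is the coefficients-to-polynomial isometry, $T$ is the evaluation map of Lemma~\ref{bound M lemma}, and $C$ is the synthesis operator of the union frame $\{P_{E_{\lambda_s}}f_i\}$. It then writes $\|U^*f\|=\|T^*C^*f\|$, bounds $\|T^*\|$ from above by $\|T\|$, and bounds $T^*$ from below by invoking the general fact that a right inverse $M$ of $T$ gives $\inf_{\|a\|=1}\|T^*a\|\ge 1/\|M\|$, with $\|M\|$ estimated in Lemma~\ref{bound M lemma}. Your version dualizes this: you compute $\|U^*g\|^2=\sum_{i,j}|\langle g,R^jf_i\rangle|^2$ by hand, recognize the Vandermonde structure for each fixed $i$, and invert explicitly via the Lagrange coefficients. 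The Cauchy--Schwarz step $|w_{is}|^2\le\|B_s\|^2_{\mathcal P_k}\sum_{j}|\langle g,R^jf_i\rangle|^2$ is precisely the concrete unwinding of the operator inequality $\|a\|\le\|M\|\,\|T^*a\|$, and your appeal to the bound on $\sum_s\|B_s\|_{\mathcal P_k}^2$ from the proof of Lemma~\ref{bound M lemma} is exactly what the paper's $\|M\|$-estimate encodes. The gain of your route is that it is entirely self-contained and elementary---no external reference for the lower bound of $T^*$ is needed; the paper's route makes the structure (synthesis factorization through polynomial evaluation) more transparent and reusable. Your closing remark about the conjugation bookkeeping is accurate and harmless for the stated constants.
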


Observe that the bounds estimates obtained only depend on the operator norm, the number of eigenvalues of $R$ and, in the upper bound, the number of iterations $|K|=k$.

\begin{proof}		
	Consider $U:\ell^2(I\times K)\rightarrow \mathcal H$ the synthesis operator of 
	$\left\{R^{j}f_{i}: i\in I,j\in K\right\}$ which is defined by
	$$Uc=\sum_{i\in I}\sum_{j\in K}c_{ij}R^jf_i$$
	for every $c\in\ell^2(I\times K)$.
	Observe that we have
	\begin{align}
	Uc=\sum_{i\in I}\sum_{j\in K}c_{ij}R^jf_i &= \sum_{i\in I}\sum_{j\in K}c_{ij} \sum_{s=1}^r \lambda_s^j P_{E_{\lambda_s}}f_i \nonumber \\ &=\sum_{i\in I}\sum_{s=1}^r\left(\sum_{j\in K} c_{ij}\lambda_s^j \right)P_{E_{\lambda_s}}f_i.\label{Uc}
	\end{align} 
	
	The idea of this proof will be to estimate the frame bounds through the synthesis and the analysis operator. In order to do that, we will decompose $U$ into three auxiliary operators $C$, $P$ and $T$, which we define next.

We start by defining $C$. Since $\mathcal H= E_{\lambda_1}\oplus \dots \oplus E_{\lambda_r}$, where the sums are orthogonal, and each $E_{\lambda_s}$ has a frame with the same constants $A$ and $B$, then the union of these frames, i.e. 
	\begin{equation}\label{union of projections}
	\left\{P_{E_{\lambda_s}}f_i:i\in I, s=1,\dots,r\right\},
	\end{equation} is a frame for $\mathcal H$ with constants $A$ and $B$. 
	Thus, $C$ will be the synthesis operator of \eqref{union of projections}, i.e., $C:\ell^2(I\times\{1,\dots,r\})\rightarrow \mathcal H$ and 
	$$Ca = \sum_{i\in I}\sum_{s=1}^r a_{is}P_{E_{\lambda_s}}f_i$$
	for $a\in \ell^2(I\times\{1,\dots,r\})$. 
		
	Second, define $P:\ell^2(I\times K)\rightarrow \mathcal P_k^{|I|}$ as $c=(c_{ij})_{i,j}\mapsto (p_i)_{i\in I}$ where $p_i(z)=\sum_{j=0}^k c_{ij} z^j$. 
		Note that $P$ is an isometry.
	Finally, we consider $T$ defined as in \eqref{T_lambda,m}, with $\lambda_1,\dots,\lambda_r$ the eigenvalues of $R$. 
	
	Using the operators already defined, the operator $U$ have the expression
	$$Uc=\sum_{i\in I}\sum_{s=1}^r\left( T P c\right)_{is} P_{E_{\lambda_s}}f_i = C\,TPc,$$
	and therefore, if $f\in \mathcal H$, 
	\begin{equation}\label{U*}
	\|U^*f\|^2 = \|P^*T^*C^*f\|^2=\|T^*C^*f\|^2.
	\end{equation}
	
	Given that $T$ is, by Lemma \ref{bound M lemma}, a surjective operator, then $T^{*}$ is bounded from below by a positive constant. In particular, $$\inf\left\{\|T^*a\|:a\in\ell^2(I\times \{1,\dots,r\}),\,\|a\|=1\right\} = \gamma >0.$$ Moreover, it is well known that given an operator $M$ such that $TM = \mathcal I$, then $\gamma = 1/\|M\|$ (see, for instance, \cite{Boul}). 
	Let $M$ be defined as in \eqref{M} with $\lambda_1,\dots,\lambda_r$ the eigenvalues of $R$.
    Thus, by Lemma \ref{bound M lemma} we get that
	$TM = \mathcal I$ and since $\|R\| = \beta =\max_{1\leq s\leq r} |\lambda_s|$, we have 
	$$\gamma\geq \left(  \frac{r}{\alpha}\sum_{u=0}^{r-1}\binom{r-1}{u}^2\|R\|^{2u}\right)^{-1/2}.$$ Therefore, from (\ref{U*}) we deduce that 
	$$\|U^*f\|^2\geq \gamma\|C^*f\|^2\geq \gamma A \|f\|^2\geq A \left(\frac{r}{\alpha}\sum_{u=0}^{r-1}\binom{r-1}{u}^2\|R\|^{2u}\right)^{-1} \|f\|^2,$$
	which gives us a lower frame bound for $\left\{R^{j}f_{i}: i\in I,j\in K\right\}$. On the other hand, by the estimate bound of $\|T\|$ obtained in Lemma \ref{bound M lemma} it follows that 
		\begin{align*}
	\|U\|^2=\|CTP\|^2\leq B \|T\|^2\leq B \left(r\sum_{j=0}^k \|R\|^{2j}\right),
	\end{align*}
	from which we obtain an upper frame bound for $\left\{R^{j}f_{i}: i\in I,j\in K\right\}$.
\end{proof}	

\subsection{Dynamical sampling for shift-preserving operators}\label{section-DS-SP}

In this subsection we assume that $V$ is a finitely generated shift-invariant space of length $\ell$.
We show the solution of the dynamical sampling problem for a bounded, normal, shift-preserving operator $L:V\to V$ acting on a set of functions
$\mathcal F=\left\{ f_i:\,i\in I \right\}$ of $V$. That is, we give conditions on $L$ and $\mathcal F$ in order that the system
$$\left\{L^j f_i : i\in I \,; j\in K \right\}$$
is a frame generator set
of $V$, where $I=\{1,\dots,m\}$ and $K=\{0,\dots,\ell-1\}$. 

The conditions found are reminiscent of the conditions for the finite-dimensional case. These are obtained through the theory of $s$-diagonalization of shift-preserving operators, which was developed in \cite{ACCP} and reviewed in the previous section.

We prove two theorems, one for the necessary conditions and one for the sufficient conditions. In the first theorem, the assumption of normality of the shift-preserving operator is not needed.

\begin{theorem}\label{DS-SIS-theorem1}
	Let $V$ be a finitely generated shift-invariant space of length $\ell$, and let  $L:V\rightarrow V$ be a bounded, shift-preserving operator with range operator $R$. Suppose $I=\{1,\dots,m\}$ and $K=\{0,\dots,\ell-1\}$. Let $\{f_i:\,i\in I\}$ be a set of functions in $V$.
	
	If $\left\{L^{j}f_{i}:i\in I,\, j\in K\right\}$ is a frame generator set of $V$ with bounds $A,B>0$, then for every $s$-eigenvalue $\Lambda_{a}$ of $L^{*}$, if $V_a=\ker(L^*-\Lambda_a)$, we have that $\left\{P_{V_{a}} f_{i}: i\in I\right\}$ is a frame generator set of $V_{a}$ with frame bounds 
	$$A\left(\sum_{j=0}^{\ell-1} \|L\|^{2j} \right)^{-1}\quad \text{and}\qquad  B.$$
\end{theorem}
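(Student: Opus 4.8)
The plan is to reduce this statement to the finite-dimensional result Theorem~\ref{teo1} applied fiberwise, using the fiberization machinery and the relationship between $s$-eigenvalues of $L^*$ and eigenvalues of $R^*(\omega)$. First I would fix an $s$-eigenvalue $\Lambda_a$ of $L^*$ with associated $s$-eigenspace $V_a = \ker(L^*-\Lambda_a)$. By Theorem~\ref{adjoint L} the adjoint $L^*$ is shift-preserving with range operator $R^*$ satisfying $R^*(\omega)=(R(\omega))^*$ a.e. By Proposition~\ref{prop-eigen} applied to $L^*$, the function $\lambda_a(\omega)=\hat a(\omega)$ is an eigenvalue of $R^*(\omega)$ for a.e. $\omega\in\sigma(V_a)$, and the range function of $V_a$ is $J_{V_a}(\omega)=\ker(R^*(\omega)-\lambda_a(\omega)\mathcal I)=E_{\lambda_a(\omega)}$ in the notation of Subsection~\ref{sec-bounds}. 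I would also record that, since $\ell=\operatorname{ess\,sup}_\omega \dim J(\omega)$, for a.e. $\omega$ the space $J(\omega)$ has dimension $n(\omega)\le\ell$, so $K=\{0,\dots,\ell-1\}$ contains $\{0,\dots,n(\omega)-1\}$; this is what lets us invoke the finite-dimensional dynamical sampling results at each fiber.

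Next I would transport the frame hypothesis to the fibers. By Theorem~\ref{riesz basis fiber}, since $\{L^jf_i:i\in I,j\in K\}$ is a frame generator set of $V$ with bounds $A,B$, the fiber system $\{\mathcal T(L^jf_i)(\omega):i\in I,j\in K\}$ is a frame of $J(\omega)$ with the same uniform bounds $A,B$ for a.e.\ $\omega$. By the intertwining property~\eqref{prop of R}, $\mathcal T(L^jf_i)(\omega)=R(\omega)^j(\mathcal Tf_i(\omega))$, so for a.e.\ $\omega$ the set $\{R(\omega)^j(\mathcal Tf_i(\omega)):i\in I,j\in K\}$ is a frame of $J(\omega)$ with bounds $A,B$. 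Now apply Theorem~\ref{teo1} with $\mathcal H=J(\omega)$, $R=R(\omega)$ (so $R^*=R^*(\omega)$), and $f_i=\mathcal Tf_i(\omega)$: for a.e.\ $\omega$ and in particular for a.e.\ $\omega\in\sigma(V_a)$, since $\lambda_a(\omega)\in\operatorname{spec}(R^*(\omega))$, the set $\{P_{E_{\lambda_a(\omega)}}(\mathcal Tf_i(\omega)):i\in I\}$ is a frame of $E_{\lambda_a(\omega)}=J_{V_a}(\omega)$ with bounds $A/C_{\lambda_a(\omega)}$ and $B/C_{\lambda_a(\omega)}$, where $C_{\lambda_a(\omega)}=\sum_{j\in K}|\lambda_a(\omega)|^{2j}=\sum_{j=0}^{\ell-1}|\hat a(\omega)|^{2j}$.

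The remaining point is to convert these fiberwise frames back into a statement about $V_a$ with \emph{uniform} bounds, and to replace the $\omega$-dependent constant $C_{\lambda_a(\omega)}$ by the uniform constant $\sum_{j=0}^{\ell-1}\|L\|^{2j}$. For the uniformization step I would check that $P_{E_{\lambda_a(\omega)}}(\mathcal Tf_i(\omega)) = \mathcal T(P_{V_a}f_i)(\omega)$ a.e.\ using Helson's Lemma~\ref{Helson projections}: indeed $\mathcal T(P_{V_a}f_i)(\omega)=P_{J_{V_a}(\omega)}(\mathcal Tf_i(\omega))$ and $J_{V_a}(\omega)=E_{\lambda_a(\omega)}$. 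For the constant, note that $|\hat a(\omega)|=|\lambda_a(\omega)|$ is an eigenvalue of $R^*(\omega)$, hence $|\hat a(\omega)|\le\|R^*(\omega)\|=\|R(\omega)\|\le\operatorname{ess\,sup}_\omega\|R(\omega)\|=\|L\|$ by~\eqref{norm of R and L}; therefore $C_{\lambda_a(\omega)}\le\sum_{j=0}^{\ell-1}\|L\|^{2j}$ for a.e.\ $\omega\in\sigma(V_a)$, and since each term in $C_{\lambda_a(\omega)}$ is nonnegative with the $j=0$ term equal to $1$, we also have $C_{\lambda_a(\omega)}\ge 1$. Combining, the fiber frames $\{\mathcal T(P_{V_a}f_i)(\omega):i\in I\}$ of $J_{V_a}(\omega)$ have the uniform lower bound $A\big(\sum_{j=0}^{\ell-1}\|L\|^{2j}\big)^{-1}$ and uniform upper bound $B/1=B$ for a.e.\ $\omega\in\sigma(V_a)$ (outside $\sigma(V_a)$ the fiber is trivial and imposes nothing). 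Applying Theorem~\ref{riesz basis fiber} in the reverse direction to $V_a$ and the family $\{P_{V_a}f_i:i\in I\}$ yields that $\{P_{V_a}f_i:i\in I\}$ is a frame generator set of $V_a$ with the claimed bounds. The only mildly delicate points are the measurability of $\omega\mapsto\lambda_a(\omega)$ (guaranteed by Proposition~\ref{prop-eigen}(ii) and Remark~\ref{Teo 6.13}, where the eigenvalue functions are constructed as measurable functions) and making sure the inequalities on $C_{\lambda_a(\omega)}$ hold on a full-measure subset of $\sigma(V_a)$; both are routine given the cited results, so I do not expect a substantial obstacle here — the content is really just the fiberwise application of Theorem~\ref{teo1} plus the uniform norm bound $|\hat a(\omega)|\le\|L\|$.
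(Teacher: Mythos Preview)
Your proposal is correct and follows essentially the same approach as the paper: transport the frame hypothesis to the fibers via Theorem~\ref{riesz basis fiber} and \eqref{prop of R}, apply Theorem~\ref{teo1} fiberwise using that $\hat a(\omega)$ is an eigenvalue of $R^*(\omega)$ on $\sigma(V_a)$, bound $1\le C_{\lambda_a(\omega)}\le \sum_{j=0}^{\ell-1}\|L\|^{2j}$ via \eqref{norm of R and L}, and then return to $V_a$ through Lemma~\ref{Helson projections} and Theorem~\ref{riesz basis fiber}. Your extra care in noting $\dim J(\omega)\le\ell$ (so that Theorem~\ref{teo1} applies with the given $K$) and flagging the measurability issue is welcome but does not depart from the paper's argument.
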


\begin{proof}
	Suppose that $\left\{L^{j}f_{i}:i\in I, j\in K\right\}$ is a frame generator set of $V$ with frame bounds $A,B>0$. Then, by Theorem \ref{riesz basis fiber}, 
	$$\left\{\mathcal{T}(L^{j}f_{i})(\omega): i\in I, j\in K\right\}$$ 
	is a frame of $J(\omega)$ with uniform bounds $A,B$ for a.e. $\omega\in [0,1)^{d}$. By \eqref{prop of R} we have that 
	$$ \left\{R(\omega)^{j}(\mathcal{T}f_{i}(\omega)): i\in I, \,j\in K\right\}$$
	is a frame of $J(\omega)$ with uniform bounds $A,B$ for a.e. $\omega\in [0,1)^{d}$. 
	
	Let $\Lambda_a$ be an $s$-eigenvalue of $L^*$. By Theorem \ref{adjoint L} and item (i) of Proposition \ref{prop-eigen}, we have that $\widehat{a}(\omega)$ is an eigenvalue of $R^{*}(\omega)$ for a.e. $\omega\in \sigma(V_{a})$. Then, Theorem \ref{teo1} implies that the set
	\begin{equation}\label{projections at w}
	\left\{P_{J_{V_{a}}(\omega)}(\mathcal{T}f_{i}(\omega)): i\in I, \,j\in K\right\}
	\end{equation}
	is a frame of $J_{V_{a}}(\omega)$ with bounds $A/C_{a}(\omega)$ and $B/C_{a}(\omega)$ for a.e. $\omega\in \sigma(V_{a})$, where $C_{a}(\omega)=\sum_{j=0}^{\ell-1} |\widehat{a}(\omega)|^{2j}$. 
	
	Note that by \eqref{norm of R and L} we have that
	\begin{align*}
	1\leq C_{a}(\omega)
	=\sum_{j=0}^{\ell-1} |\widehat{a}(\omega)|^{2j}
	\leq \sum_{j=0}^{\ell-1} \|R(\omega)\|^{2j}
	\leq  \sum_{j=0}^{\ell-1} \|L\|^{2j}.
	\end{align*}
	Then, we are able to obtain uniform bounds for \eqref{projections at w} for a.e. $\w\in\sigma(V_a)$.
	Finally, using Lemma \ref{Helson projections} and Theorem \ref{riesz basis fiber}, we get that $\left\{P_{V_{a}} f_{i}: i\in I\right\}$ is a frame generator set of $V_{a}$ with lower bound $ A\left(\sum_{j=0}^{\ell-1} \|L\|^{2j} \right)^{-1}$ and $B$ as an upper bound.
\end{proof}

Different from the finite-dimensional case of the dynamical sampling problem, for a sufficient condition we need to add an extra uniformity assumption.

\begin{definition}
	Let $V$ be a finitely generated shift-invariant space, and let  $L:V\rightarrow V$ be a bounded shift-preserving operator with range operator $R$. We say that $L$ has the {\it spectral gap property} if there exists a constant $c>0$ such that $|\lambda-\lambda'|\geq c$ for every $\lambda\neq \lambda'$ in $\text{spec}(R(\w))$ and for a.e. $\w\in \sigma(V)$.
\end{definition}

\begin{theorem}\label{DS-SIS-theorem2}
	Let $V$ be a finitely generated shift-invariant space of length $\ell$, and let  $L:V\rightarrow V$ be a bounded, normal, shift-preserving operator with range operator $R$, satisfying the spectral gap property.
	Let $I=\{1,\dots,m\}$, $K=\{0,\dots,\ell-1\}$ and let $\{f_i:\,i\in I\}$ be a set of functions in $V$. 
	
	Then,  $\left\{L^{j}f_{i}:i\in I,\,j\in K \right\}$ is a frame generator set of $V$ if and only if for every $s$-eigenvalue $\Lambda_a$ of $L^*$, if $V_a=\ker(L^*-\Lambda_a)$, we have that $\left\{P_{V_{a}} f_{i}: i\in I \right\}$ is a frame generator set of $V_{a}$ with the same frame bounds.
	
	\end{theorem}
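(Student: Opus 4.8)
The plan is to prove the two implications separately. Necessity is immediate from Theorem~\ref{DS-SIS-theorem1}: if $\left\{L^{j}f_{i}:i\in I,\,j\in K\right\}$ is a frame generator set of $V$ with bounds $A,B$, that theorem already gives, for every $s$-eigenvalue $\Lambda_a$ of $L^*$ (with $V_a=\ker(L^*-\Lambda_a)$), that $\left\{P_{V_{a}}f_{i}:i\in I\right\}$ is a frame generator set of $V_a$ with bounds $A\big(\sum_{j=0}^{\ell-1}\|L\|^{2j}\big)^{-1}$ and $B$. Since these two numbers do not depend on $a$, they serve as common frame bounds for all the $V_a$, which is exactly the stated conclusion; neither normality nor the spectral gap property is used in this direction.

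For the converse I would first bring in the $s$-diagonalization machinery. Since $L$ is normal, it is $s$-diagonalizable (Theorem~\ref{spectral theorem}), and by Remark~\ref{Teo 6.13} we may fix an $s$-diagonalization $(V,L,a_1,\dots,a_r)$ with nested spectra $\sigma(V_{a_{s+1}})\subseteq\sigma(V_{a_s})$; on each piece $B_s$ of the partition of $\sigma(V)$ described there, the fiber $R(\w)$ is normal (Theorem~\ref{adjoint L}), has exactly $s$ distinct eigenvalues $\hat a_1(\w),\dots,\hat a_s(\w)$, and $J(\w)$ splits orthogonally as $J_{V_{a_1}}(\w)\oplus\cdots\oplus J_{V_{a_s}}(\w)$. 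By Proposition~\ref{adjoint diag}, $(V,L^*,b_1,\dots,b_r)$ is an $s$-diagonalization of $L^*$ with $\ker(L^*-\Lambda_{b_s})=V_{a_s}$, so applying the hypothesis to each $\Lambda_{b_s}$ gives that $\left\{P_{V_{a_s}}f_i:i\in I\right\}$ is a frame generator set of $V_{a_s}$ with common bounds $A',B'$. Through Theorem~\ref{riesz basis fiber} and Lemma~\ref{Helson projections}, this says that $\left\{P_{J_{V_{a_s}}(\w)}(\cT f_i(\w)):i\in I\right\}$ is a frame of $J_{V_{a_s}}(\w)$ with bounds $A',B'$ for a.e.\ $\w\in\sigma(V_{a_s})$, for every $s$.

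Next I would argue fiberwise. For a.e.\ $\w\in B_s$, since $\dim J(\w)\le\ell$ the index set $K=\{0,\dots,\ell-1\}$ is large enough to apply Theorem~\ref{teo1converse} to the normal transformation $R(\w)$ on $J(\w)$, with eigenvalues $\hat a_1(\w),\dots,\hat a_s(\w)$, eigenspaces $J_{V_{a_j}}(\w)$, and vectors $\cT f_i(\w)$; the nesting $\sigma(V_{a_s})\subseteq\sigma(V_{a_j})$ for $j\le s$ guarantees that the required projection frames $\{P_{J_{V_{a_j}}(\w)}(\cT f_i(\w))\}$ are available at $\w$ with bounds $A',B'$. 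The output is that $\left\{R(\w)^{j}(\cT f_i(\w)):i\in I,\,j\in K\right\}$ is a frame of $J(\w)$ with bounds depending on $A'$, $B'$, $s$, $\|R(\w)\|$ and $\alpha_s(\w):=\min_{1\le j\le s}\prod_{u\neq j}|\hat a_j(\w)-\hat a_u(\w)|^2$. The point is to make these bounds uniform in $\w$: the upper bound is handled by $s\le r$ and $\|R(\w)\|\le\|L\|$ (from \eqref{norm of R and L}); the lower bound is the only delicate point, and here the spectral gap property gives $\alpha_s(\w)\ge c^{2(s-1)}\ge\min_{1\le t\le r}c^{2(t-1)}>0$, independently of $\w$ and of $s$. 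Combining the estimates over all the $B_s$ (and using $J(\w)=\{0\}$ off $\sigma(V)$), $\left\{R(\w)^{j}(\cT f_i(\w)):i\in I,\,j\in K\right\}$ is a frame of $J(\w)$ with uniform bounds for a.e.\ $\w$; since $R(\w)^{j}(\cT f_i(\w))=\cT(L^{j}f_i)(\w)$ by \eqref{prop of R}, Theorem~\ref{riesz basis fiber} yields that $\left\{L^{j}f_i:i\in I,\,j\in K\right\}$ is a frame generator set of $V$.

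The main obstacle is precisely the uniformity of the lower frame bound across fibers: the lower bound in Theorem~\ref{teo1converse} degenerates as eigenvalues of $R(\w)$ collide, so without a uniform separation of the spectra of the $R(\w)$ one cannot rule out the infimum over $\w$ of the lower bounds being $0$, even though every fiber carries a frame. This is exactly what the spectral gap property is for, and it explains why this hypothesis (absent in the finite-dimensional Theorem~\ref{DS finite dimension}) must be imposed. The remaining work — choosing the $s$-diagonalization of Remark~\ref{Teo 6.13} so that the number of fiber eigenvalues on each $B_s$ is controlled, and matching the $s$-eigenspaces of $L^*$ with the relevant fiber eigenspaces — is bookkeeping supplied by Propositions~\ref{prop-eigen} and \ref{adjoint diag}.
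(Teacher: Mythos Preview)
Your proposal is correct and follows essentially the same route as the paper: necessity is deferred to Theorem~\ref{DS-SIS-theorem1}, and for sufficiency you pass to fibers, partition $\sigma(V)$ into the sets $B_s$ where $R(\w)$ has a fixed number of eigenvalues, apply Theorem~\ref{teo1converse} on each fiber, and use the spectral gap property together with $\|R(\w)\|\le\|L\|$ to make the resulting frame bounds uniform in $\w$. The only cosmetic difference is that the paper chooses the nested $s$-diagonalization of Remark~\ref{Teo 6.13} directly for $L^*$, whereas you take it for $L$ and then invoke Proposition~\ref{adjoint diag} to obtain the corresponding $s$-diagonalization of $L^*$ with the \emph{same} eigenspaces $V_{a_s}$; since the eigenspaces coincide and $|\hat a_j(\w)-\hat a_u(\w)|=|\overline{\hat a_j(\w)}-\overline{\hat a_u(\w)}|$, this makes no difference in the estimates.
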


\begin{proof}
	The necessity has been proved in Theorem \ref{DS-SIS-theorem1}. 
	
	For the converse, recall that since $L$ is a normal operator, then $L^*$ is $s$-dia\-gon\-alizable (see Proposition \ref{adjoint diag}). As discussed in Remark \ref{Teo 6.13}, there exists an $s$-diagonalization of $L^*$ $(V,L^*,a_1,\dots,a_r)$ such that $\sigma(V_{a_{s+1}})\subseteq\sigma(V_{a_{s}})$ for every $s=1,\dots,r-1$ and, given that $L$ has the spectral gap property, there exists a constant $c>0$ such that
	\begin{equation}\label{separation of a_s}\big|\hat{a}_s(\w)-\hat{a}_u(\w)\big|\geq c \end{equation}
	for a.e. $\w\in[0,1)^d$ and for every $u\neq s$, with $u,s=1,\dots,r$. 	
	
	Now, suposse that $\left\{P_{V_{a_s}} f_{i}: i\in I\right\}$ is a frame generator set of $V_{a_s}$ for every $s=1,...,r$ with bounds $A,B>0$. Then, by Theorem \ref{riesz basis fiber}, 
	$$\left\{\mathcal{T}(P_{V_{a_s}}f_{i})(\omega): i\in I \right\}$$
	is a frame of $J_{V_{a_s}}(\omega)$ with uniform bounds $A$, $B$ for a.e. $\omega\in [0,1)^{d}$ and for every $s=1,...,r$. 
	By Lemma \ref{Helson projections}, we have that $$\left\{P_{J_{V_{a_s}}(\omega)}(\mathcal{T}f_{i}(\omega)): i\in I\right\}$$ is a frame of $J_{V_{a_s}}(\omega)$ with uniform bounds $A$, $B$ for a.e. $\omega\in [0,1)^{d}$ and for every $s=1,...,r$. 
	
	Recall that the number of eigenvalues of $R(\w)$ may vary through the different values of $\w$. In order to apply Theorem \ref{teo1converse}, we need to split $[0,1)^d$ into measurable sets where the number of eigenvalues is constant. For that, as in Remark \ref{Teo 6.13}, let us define for $h=1,\dots,r-1$, the sets $B_h:=\sigma(V_{a_h})\setminus \sigma(V_{a_{h+1}})$ and $B_r:=\sigma(V_{a_r})$.  Then, we have that each $B_h$ is the set of all $\w\in \sigma(V)$ for which $R(\w)$ has exactly $h$ different eigenvalues. Furthermore, 
	 $$[0,1)^d = \bigcup_{h=1}^r B_h \cup \left( [0,1)^d\setminus \sigma(V)\right).$$
	
	Fix $h\in\{1,\dots,r\}$. For a.e. $\w\in B_h$, we have that $\left\{P_{J_{V_{a_s}}(\omega)}(\mathcal{T}f_{i}(\omega)): i\in I\right\}$ is a frame of $J_{V_{a_s}}(\w)$ for $s=1,\dots,h$. Theorem \ref{teo1converse} implies that
	\begin{equation}\label{frame Rj}
	\left\{R(\omega)^{j}(\mathcal{T}f_{i}(\omega)): i\in I,\, j\in K\right\}
	\end{equation}
	is a frame of $J(\omega)$ for a.e. $\omega\in B_h$ with frame bounds $$A\left( \frac{r}{\alpha(\omega)}\sum_{u=0}^{h-1} {h-1 \choose u}^{2} \|R(\omega)\|^{2u} \right)^{-1} \qquad \text{ and } \qquad B\left(h\sum_{j=0}^{\ell-1} \|R(\w)\|^{2j}\right),$$
	$$\alpha(\omega)=\min_{1\leq s\leq h} \prod_{\substack{u=1\\ u\neq s}}^{h} |\widehat{a}_{s}(\omega) - \widehat{a}_{u}(\omega)|^{2}.$$ 
	
	Observe that since \eqref{separation of a_s} holds, we might as well asume that $c<1$, and so $\alpha(\w) \geq c^{2h}\geq c^{2r}$ for a.e. $\w\in B_h$. On the other hand, by \eqref{norm of R and L} we know that $\|R(\w)\|\leq \|L\|$ almost everywhere. In this way, we are able to obtain the following uniform bounds for a.e. $\w\in B_h$,
	\begin{equation*}A\left( \frac{h}{\alpha(\omega)}\sum_{u=0}^{h-1} {h-1 \choose u}^{2} \|R(\omega)\|^{2u} \right)^{-1} \geq A\left(  \frac{r}{c^{2r}} \sum_{u=0}^{r-1} {r-1 \choose u}^2 \|L\|^{2u}\right)^{-1}\end{equation*}
	and
	\begin{equation*}
	B\left(h\sum_{j=0}^{\ell-1} \|R(\w)\|^{2j}\right) \leq B\left(r\sum_{j=0}^{\ell-1} \|L\|^{2j}\right).	
		\end{equation*}\\
	Since these bounds are the same on every $B_h$ for $h=1,\dots, r$, we have that (\ref{frame Rj}) is a frame of $J(\omega)$ for a.e. $\w\in[0,1)^d$ with uniform bounds and therefore $$\left\{L^{j}f_i: i\in I, \,j\in K\right\}$$ is a frame generator set of $V$ with bounds $A\left(  \frac{r}{c^{2r}} \sum_{u=0}^{r-1} {r-1 \choose u}^2 \|L\|^{2u}\right)^{-1}$ and \break$B\left(r\sum_{j=0}^{\ell-1} \|L\|^{2j}\right)$.
\end{proof}

\bibliographystyle{amsalpha}

\end{document}